\theoremstyle{plain} 
\newtheorem{theorem}{\indent\sc Theorem}[section]
\newtheorem{lemma}[theorem]{\indent\sc Lemma}
\newtheorem{corollary}[theorem]{\indent\sc Corollary}
\newtheorem{proposition}[theorem]{\indent\sc Proposition}
\theoremstyle{definition} 
\newtheorem{definition}[theorem]{\indent\sc Definition}
\newtheorem{remark}[theorem]{\indent\sc Remark}
\title{Generalized quasi-statistical structures}
\author{Adara M. Blaga and Antonella Nannicini}
\date{}
\begin{document}

\maketitle

\markboth{{\small\it {\hspace{4cm} Generalized quasi-statistical structures}}}{\small\it{Generalized quasi-statistical structures
\hspace{4cm}}}

\footnote{ 
2010 \textit{Mathematics Subject Classification}.
53C15, 53B05, 53D05.
}
\footnote{ 
\textit{Key words and phrases}.
Quasi-statistical structures, generalized geometry, Patterson-Walker metric, Sasaki metric, Norden structures.
}

\begin{abstract}
Given a non-degenerate $(0,2)$-tensor field $h$ on a smooth manifold $M$, we consider a natural generalized complex and a generalized product structure on the generalized tangent bundle $TM\oplus T^*M$ of $M$ and we show that they are $\nabla$-integrable, for $\nabla$ an affine connection on $M$, if and only if $(M,h,\nabla)$ is a quasi-statistical manifold. We introduce the notion of generalized quasi-statistical structure and we prove that any quasi-statistical structure on $M$ induces  generalized quasi-statistical structures on $TM\oplus T^*M$. In this context,  dual connections are considered and some of their properties are established. The results are described in terms of  Patterson-Walker and Sasaki metrics on $T^*M$,  horizontal lift and Sasaki metrics on $TM$ and, when the connection $\nabla$ is flat, we define prolongation of quasi-statistical structures on manifolds to their cotangent and tangent bundles via generalized geometry. Moreover, Norden and Para-Norden structures are defined on $T^*M$ and $TM$.
\end{abstract}

\bigskip

\section{Introduction}

Statistical manifolds were introduced in \cite{a}, \cite{n}. They are manifolds of probability distributions, used in Information Geometry and related to Codazzi tensors and Affine Geometry. Let $h$ be a pseudo-Riemannian metric and let $\nabla$ be a torsion-free affine connection on a smooth manifold $M$. Then $(M,h,\nabla)$ is called a \textit{statistical manifold} if $({\nabla}_Xh)(Y,Z)=({\nabla}_Yh)(X,Z)$, for all $X,Y,Z\in C^{\infty}(TM)$. The definition can be extended to $(0,2)$-tensor fields and affine connections $\nabla$ with torsion, $T^{\nabla}$. In this case, $(h,\nabla)$ is called a \textit{quasi-statistical structure} on $M$ if $d^{\nabla}h=0$, where $ (d^{\nabla}h)(X,Y,Z):=({\nabla}_Xh)(Y,Z)-({\nabla}_Yh)(X,Z)+h(T^{\nabla}(X,Y),Z)$, for all $X,Y,Z\in C^{\infty}(TM)$, and the triple $(M,h,\nabla)$ is called a \textit{quasi-statistical manifold}.

In this paper, given a non-degenerate $(0,2)$-tensor field $h$ and an affine connection $\nabla$ on a smooth manifold $M$, we consider a natural generalized complex and a generalized product structure on the generalized tangent bundle $TM\oplus T^*M$ of $M$ and we show that they are $\nabla$-integrable if and only if $(M,h,\nabla)$ is a quasi-statistical manifold. We introduce the notion of \textit{generalized quasi-statistical structure} and we prove that any quasi-statistical structure on $M$, defined by a symmetric or skew-symmetric tensor, induces two natural generalized quasi-statistical structures on $TM\oplus T^*M$. We compute the dual connections and study some of their properties. The results are described in terms of  Patterson-Walker and Sasaki metrics on $T^*M$,  horizontal lift and Sasaki metrics on $TM$. In the case, the connection $\nabla$ is flat we can define prolongation of quasi-statistical structures on manifolds to their cotangent and tangent bundles via generalized geometry. Moreover, in the last section, we construct Norden and Para-Norden structures on $T^*M$ and $TM$.

\section{Quasi-statistical structures and generalized structures induced}

Let $M$ be a smooth manifold and $h$ a non-degenerate $(0,2)$-tensor field on $M$.
On the generalized tangent bundle $TM\oplus T^*M$ of $M$, we shall consider the generalized complex structure
\begin{equation} \label{m10}\hat{J}_c:=\begin{pmatrix}
                 0 & -h^{-1} \\
                 h & 0 \\
               \end{pmatrix}
\end{equation}
and the generalized product structure
\begin{equation} \label{m20}\hat{J}_p:=\begin{pmatrix}
                 0 & h^{-1} \\
                 h & 0 \\
               \end{pmatrix},
\end{equation}
where we denoted by $h$ the musical isomorphism, $\flat_h:TM\rightarrow T^*M$, $\flat_h(X):=i_Xh$, and by $h^{-1}$ its inverse, ${\sharp}_h:T^*M\rightarrow TM$.

Let
\begin{equation}\label{m1}
<X+\alpha,Y+\beta>:=-\frac{1}{2}(\alpha(Y)+\beta(X))
\end{equation}
be the natural indefinite metric on $TM\oplus T^*M$ and
\begin{equation}\label{m2}
(X+\alpha,Y+\beta):=-\frac{1}{2}(\alpha(Y)-\beta(X))
\end{equation}
be the natural symplectic structure on $TM\oplus T^*M$.

\begin{remark}
i) If $h$ is symmetric, then:
$$<\hat{J}_c\sigma,\hat{J}_c\tau>=-<\sigma,\tau> \ \
\textit{and} \ \ (\hat{J}_c\sigma,\hat{J}_c\tau)=(\sigma,\tau),$$$$<\hat{J}_p\sigma,\hat{J}_p\tau>=<\sigma,\tau> \ \
\textit{and} \ \ (\hat{J}_p\sigma,\hat{J}_p\tau)=-(\sigma,\tau),$$
or, equivalently:
$$<\hat{J}_c\sigma,\tau>=<\sigma,\hat{J}_c\tau> \ \
\textit{and} \ \ (\hat{J}_c\sigma,\tau)=-(\sigma,\hat{J}_c\tau),$$
$$<\hat{J}_p\sigma,\tau>=<\sigma,\hat{J}_p\tau> \ \
\textit{and} \ \ (\hat{J}_p\sigma,\tau)=-(\sigma,\hat{J}_p\tau),$$
for any $\sigma$, $\tau\in C^{\infty}(TM\oplus T^*M)$.

i) If $h$ is skew-symmetric, then:
$$<\hat{J}_c\sigma,\hat{J}_c\tau>=<\sigma,\tau> \ \
\textit{and} \ \ (\hat{J}_c\sigma,\hat{J}_c\tau)=-(\sigma,\tau),$$
$$<\hat{J}_p\sigma,\hat{J}_p\tau>=-<\sigma,\tau> \ \
\textit{and} \ \ (\hat{J}_p\sigma,\hat{J}_p\tau)=(\sigma,\tau),$$
or, equivalently:
$$<\hat{J}_c\sigma,\tau>=-<\sigma,\hat{J}_c\tau> \ \
\textit{and} \ \ (\hat{J}_c\sigma,\tau)=(\sigma,\hat{J}_c\tau),$$
$$<\hat{J}_p\sigma,\tau>=-<\sigma,\hat{J}_p\tau> \ \
\textit{and} \ \ (\hat{J}_p\sigma,\tau)=(\sigma,\hat{J}_p\tau),$$
for any $\sigma$, $\tau\in C^{\infty}(TM\oplus T^*M)$.
\end{remark}

On $TM \oplus T^*M$ we consider the bilinear form:
\begin{equation}\label{m3}
\check h(X+\alpha, Y+\beta):=h(X,Y)+h(h^{-1}(\alpha),h^{-1}(\beta)),
\end{equation}
for any $X,Y \in C^{\infty}(TM)$ and $\alpha, \beta \in C^{\infty}(T^*M).$

A direct computation gives the following:

\begin{lemma} The structures $\hat{J}_c$ and $\hat{J}_p$ satisfy respectively:
$$\check h(\hat{J}_c \sigma,\tau)=2(\sigma,\tau),$$
$$\check h(\sigma, \hat{J}_p \tau)=2<\sigma,\tau>,$$
for any $\sigma, \tau \in C^{\infty}(TM \oplus T^*M)$.
\end{lemma}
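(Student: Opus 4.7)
The plan is to verify both identities by a direct computation, expanding the left-hand sides with the matrix forms of $\hat{J}_c$ and $\hat{J}_p$ from \eqref{m10}--\eqref{m20} together with the defining formula \eqref{m3} for $\check h$, and then reducing composites of $\flat_h$ and $\sharp_h$ via $\flat_h\circ\sharp_h=\mathrm{id}_{T^*M}$, $\sharp_h\circ\flat_h=\mathrm{id}_{TM}$, and the tautology $h(\sharp_h(\gamma),Z)=\gamma(Z)$.

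Concretely, writing $\sigma=X+\alpha$ and $\tau=Y+\beta$, I would first read off
\[
\hat{J}_c\sigma = -\sharp_h(\alpha)+\flat_h(X),\qquad \hat{J}_p\tau = \sharp_h(\beta)+\flat_h(Y)
\]
from the matrix definitions. Substituting the first of these into \eqref{m3} yields
\[
\check{h}(\hat{J}_c\sigma,\tau) = h\bigl(-\sharp_h(\alpha),Y\bigr) + h\bigl(\sharp_h(\flat_h X),\sharp_h(\beta)\bigr),
\]
which collapses to $-\alpha(Y)+h(X,\sharp_h(\beta))$ by the above identities and should match $2(\sigma,\tau)=\beta(X)-\alpha(Y)$ coming from \eqref{m2}. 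For the second identity one mirrors this reasoning for $\check h(\sigma,\hat{J}_p\tau)$, obtaining $\alpha(Y)+h(X,\sharp_h(\beta))$ and recognizing the result as $2\langle\sigma,\tau\rangle$ via \eqref{m1}.

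The only genuine book-keeping point is the identification $h(X,\sharp_h(\beta))\leftrightarrow \beta(X)$: this is immediate when $h$ is symmetric, while in the skew-symmetric case a compensating sign appears, in harmony with the dichotomy already recorded in Remark~2.1. Beyond that and the need to keep three separate meanings of the symbol ``$h$'' (the $(0,2)$-tensor, the musical isomorphism $\flat_h$, and its inverse $\sharp_h$) straight, I do not foresee any obstacle --- the argument is routine and confined to linear algebra on each fiber.
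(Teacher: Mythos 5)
Your strategy --- a fiberwise direct computation using $\flat_h\circ\sharp_h=\mathrm{id}$, $\sharp_h\circ\flat_h=\mathrm{id}$ and $h(\sharp_h(\gamma),\cdot)=\gamma$ --- is exactly what the paper intends (it offers no proof beyond ``a direct computation gives''), and your expansion of the left-hand sides is correct: $\check h(\hat J_c\sigma,\tau)=-\alpha(Y)+h(X,\sharp_h(\beta))$ and $\check h(\sigma,\hat J_p\tau)=\alpha(Y)+h(X,\sharp_h(\beta))$. For symmetric $h$ the first of these is $-\alpha(Y)+\beta(X)=2(\sigma,\tau)$, matching \eqref{m2}. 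So far so good.

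The gap is in your last step for the second identity. You ``recognize'' $\alpha(Y)+\beta(X)$ as $2\langle\sigma,\tau\rangle$ via \eqref{m1}, but \eqref{m1} is normalized as $\langle X+\alpha,Y+\beta\rangle=-\tfrac12(\alpha(Y)+\beta(X))$, so what you have computed is $-2\langle\sigma,\tau\rangle$, not $+2\langle\sigma,\tau\rangle$. No choice of overall sign convention repairs both identities at once: flipping \eqref{m1}--\eqref{m2} to $+\tfrac12$ fixes the second but breaks the first. This is a sign inconsistency in the Lemma itself, and a verification that actually carries the computation to the end (rather than asserting the identification) must surface it --- either flag it as an erratum or track down which convention the authors meant. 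Separately, your closing remark that in the skew-symmetric case ``a compensating sign appears, in harmony with Remark~2.1'' is not an argument: for skew-symmetric $h$ one finds $h(X,\sharp_h(\beta))=-\beta(X)$, so the two left-hand sides become $-\alpha(Y)-\beta(X)$ and $\alpha(Y)-\beta(X)$; the pairings $(\cdot,\cdot)$ and $\langle\cdot,\cdot\rangle$ then \emph{interchange} roles (up to sign) rather than each identity merely acquiring a sign. The Lemma as stated is really a statement about symmetric $h$, and your proof should say so explicitly and treat the skew case (if at all) as a separate, differently-shaped statement.
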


For $\nabla$ an affine connection on $M$, we consider the bracket $[\cdot,\cdot]_{\nabla}$ on $C^{\infty}(TM\oplus T^*M)$ \cite{na}:
$$[X+\alpha,Y+\beta]_{\nabla}:=[X,Y]+\nabla_X\beta-\nabla_Y\alpha,$$
for all $X,Y\in C^{\infty}(TM)$ and $\alpha,\beta\in C^{\infty}(T^*M)$.

A generalized complex or product structure $\hat{J}$ is called {\it $\nabla$-integrable} if its Nijenhuis tensor field $N_{\hat{J}}^{\nabla}$ with respect to $\nabla$:
$$N_{\hat{J}}^{\nabla}(\sigma, \tau):=[\hat{J}\sigma,\hat{J}\tau]_{\nabla}-\hat{J}[\hat{J}\sigma, \tau]_{\nabla}-\hat{J}[\sigma, \hat{J}\tau]_{\nabla} +\hat{J}^{2}[\sigma, \tau]_{\nabla}$$ vanishes for all $\sigma,\tau \in C^{\infty}(TM\oplus T^*M)$.

\bigskip

Let $M$ be a smooth manifold with a non-degenerate $(0,2)$-tensor field $h$ and an affine connection $\nabla$.

\begin{definition}\cite{ma}
We call $(h,\nabla)$ a \textit{quasi-statistical structure} (respectively, $(M,h,\nabla)$ a \textit{quasi-statistical manifold}) if $d^{\nabla}h=0$, where
$$(d^{\nabla}h)(X,Y,Z):=(\nabla_Xh)(Y,Z)-(\nabla_Yh)(X,Z)+h(T^{\nabla}(X,Y),Z),$$
for any $X,Y,Z\in C^{\infty}(TM)$ and $T^{\nabla}(X,Y):=\nabla_XY-\nabla_YX-[X,Y]$.
\end{definition}

We can state:
\begin{proposition}
The structures $\hat{J}_c$ and $\hat{J}_p$ are integrable if and only if $(M,h,\nabla)$ is a quasi-statistical manifold.
\end{proposition}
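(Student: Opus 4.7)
The Nijenhuis tensor $N^\nabla_{\hat{J}}$ associated with the bracket $[\cdot,\cdot]_\nabla$ is $C^\infty(M)$-bilinear and skew-symmetric. Indeed, the derivation-type identities
\[
[f\sigma,\tau]_\nabla = f[\sigma,\tau]_\nabla - \bigl(\pi(\tau)f\bigr)\,\sigma, \qquad [\sigma,f\tau]_\nabla = f[\sigma,\tau]_\nabla + \bigl(\pi(\sigma)f\bigr)\,\tau,
\]
where $\pi:TM\oplus T^*M\to TM$ denotes the canonical projection, make the correction terms in $N^\nabla_{\hat{J}}(f\sigma,\tau)$ telescope. It therefore suffices to evaluate $N^\nabla_{\hat{J}}$ on pairs of pure sections of the three types $(X,Y)$, $(X,\beta)$, and $(\alpha,\beta)$, with $X,Y\in C^\infty(TM)$ and $\alpha,\beta\in C^\infty(T^*M)$.

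I would start with $\hat{J}_c$ on a pair $(X,Y)$. Since $\hat{J}_c X=\flat_h X$, the bracket $[\flat_h X,\flat_h Y]_\nabla$ vanishes, whereas $[\flat_h X,Y]_\nabla=-\nabla_Y\flat_h X$ and $[X,\flat_h Y]_\nabla=\nabla_X\flat_h Y$. Applying $\hat{J}_c$ (which sends any $1$-form $\eta$ to $-\sharp_h\eta$) and using $\hat{J}_c^{\,2}=-\mathrm{Id}$ yields
\[
N^\nabla_{\hat{J}_c}(X,Y)=\sharp_h\bigl(\nabla_X\flat_h Y-\nabla_Y\flat_h X\bigr)-[X,Y].
\]
The Leibniz identity $(\nabla_X\flat_h Y)(Z)=(\nabla_X h)(Y,Z)+h(\nabla_X Y,Z)$, combined with the definition of the torsion $T^\nabla$, then collapses this into
\[
h\bigl(N^\nabla_{\hat{J}_c}(X,Y),Z\bigr)=(d^\nabla h)(X,Y,Z),
\]
and non-degeneracy of $h$ turns the vanishing of this component into $d^\nabla h=0$.

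The remaining generator pairs follow the same pattern. Because $h$ is non-degenerate, one can write $\alpha=\flat_h X'$ and $\beta=\flat_h Y'$; the resulting expressions for $N^\nabla_{\hat{J}_c}(X,\beta)$ and $N^\nabla_{\hat{J}_c}(\alpha,\beta)$ reduce (up to sign) to $(d^\nabla h)(X,Y',\cdot)$ and $(d^\nabla h)(X',Y',\cdot)$, respectively, so they vanish simultaneously with the first case. For $\hat{J}_p$ the only modifications are $\hat{J}_p\eta=+\sharp_h\eta$ on $1$-forms and $\hat{J}_p^{\,2}=\mathrm{Id}$; these changes compensate so that $N^\nabla_{\hat{J}_p}(X,Y)=-N^\nabla_{\hat{J}_c}(X,Y)$, and the same equivalence holds. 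The main obstacle, as I see it, lies in the careful sign bookkeeping between $\flat_h$ and $\sharp_h$ and between the two structures $\hat{J}_c,\hat{J}_p$ across the three generator cases; the one nontrivial analytic ingredient is the Leibniz formula for $\nabla\flat_h Y$, which is what translates musical derivatives into covariant derivatives of $h$.
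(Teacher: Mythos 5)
Your proposal is correct and follows essentially the same route as the paper: both evaluate $N^{\nabla}_{\hat{J}_c}$ and $N^{\nabla}_{\hat{J}_p}$ on the three generator-pair types $(X,Y)$, $(X,\flat_h Y)$, $(\flat_h X,\flat_h Y)$ and show each reduces, via the Leibniz rule for $\nabla\flat_h$, to $(d^{\nabla}h)(X,Y,\cdot)$ up to sign and a musical isomorphism, concluding by non-degeneracy of $h$. Your explicit tensoriality check for $N^{\nabla}_{\hat{J}}$ is a sound addition that the paper leaves implicit, and your sign bookkeeping in the one fully worked case agrees with the paper's.
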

\begin{proof} In this proof we will shortly denote $\hat{J}_\mp$ for $\hat{J}_c=:\hat{J}_-$ and $\hat{J}_p=:\hat{J}_+$.

Let us compute:
$$N_{\hat{J}_\mp}^{\nabla}(X, Y)=[\hat{J}_\mp X,\hat{J}_\mp Y]_{\nabla}-\hat{J}_\mp [\hat{J}_\mp X,Y]_{\nabla}-\hat{J}_\mp[X, \hat{J}_\mp Y]_{\nabla} +\hat{J}^{2}_\mp[X, Y]_{\nabla}=$$
$$=\pm h^{-1}(({\nabla}_X h)Y-({\nabla}_Y h)X+h({\nabla}_X Y-{\nabla}_Y X-[X,Y]))=$$
$$=\pm h^{-1}((d^{\nabla} h)(X,Y))$$
$$N_{\hat{J}_\mp}^{\nabla}(X,h(Y))=[\hat{J}_\mp X,\hat{J}_\mp h(Y)]_{\nabla}-\hat{J}_\mp [\hat{J}_\mp X,h(Y)]_{\nabla}-\hat{J}_\mp[X, \hat{J}_\mp h(Y)]_{\nabla} +\hat{J}^{2}_\mp[X,h(Y)]_{\nabla}=$$
$$=\mp (({\nabla}_X h)Y-({\nabla}_Y h)X+h({\nabla}_X Y-{\nabla}_Y X-[X,Y]))=$$
$$=\mp (d^{\nabla} h)(X,Y)$$
$$N_{\hat{J}_\mp}^{\nabla}(h(X),h(Y))=[\hat{J}_\mp h(X),
\hat{J}_\mp h(Y)]_{\nabla}-\hat{J}_\mp [\hat{J}_\mp h(X),h(Y)]_{\nabla}-\hat{J}_\mp[h(X), \hat{J}_\mp h(Y)]_{\nabla} +$$
$$+\hat{J}^{2}_\mp [h(X),h(Y)]_{\nabla}=$$
$$=-h^{-1}(({\nabla}_X h)Y-({\nabla}_Y h)X+h({\nabla}_X Y-{\nabla}_Y X-[X,Y]))=$$
$$=-h^{-1}((d^{\nabla} h)(X,Y)),$$
for any $X,Y\in C^{\infty}(TM).$ Therefore the proof is complete.
\end{proof}

\section{Generalized quasi-statistical structures}

\begin{definition}
We call $D:C^{\infty}(TM\oplus T^*M)\times C^{\infty}(TM\oplus T^*M)\rightarrow C^{\infty}(TM\oplus T^*M)$ an \textit{affine connection} on $TM\oplus T^*M$ if it is $\mathbb{R}$-bilinear and for any $f\in C^{\infty}(M)$ and $\sigma,\tau \in C^{\infty}(TM\oplus T^*M)$, we have:
\begin{enumerate}
  \item $D_{f\sigma}\tau=f D_{\sigma}\tau$,
  \item $D_{\sigma}(f\tau)=\sigma(f)\tau+f D_{\sigma}\tau$,
\end{enumerate}
where $(X+\alpha)(f):=X(f)$, for $X+\alpha \in C^{\infty}(TM\oplus T^*M)$.
\end{definition}

Let $\hat{h}$ be a non-degenerate $(0,2)$-tensor field and $D$ an affine connection on the generalized tangent bundle $TM\oplus T^*M$ of the smooth manifold $M$.

\begin{definition}
We call $(\hat{h},D)$ a \textit{generalized quasi-statistical structure} if $d^{D}\hat{h}=0$, where
$$(d^{D}\hat{h})(\sigma,\tau,\nu):=(D_{\sigma}\hat{h})(\tau,\nu)-(D_{\tau}\hat{h})(\sigma,\nu)+
\hat{h}(T^{D}(\sigma,\tau),\nu),$$
for any $\sigma,\tau,\nu\in C^{\infty}(TM\oplus T^*M)$ and $T^{D}(\sigma,\tau):=D_{\sigma}\tau-D_{\tau}\sigma-[\sigma,\tau]_{\nabla}$, with $\nabla$ a given connection on $M$.
\end{definition}

\subsection{Generalized quasi-statistical structures induced by quasi-statistical structures}

Let $h$ be a non-degenerate $(0,2)$-tensor field and let $\nabla$ be an affine connection on $M$. We define the affine connection $\hat{\nabla}$ on $TM \oplus T^*M$ by:
\begin{equation}\label{m5}
\hat{\nabla}_{X+\alpha}Y+\beta:=\nabla_XY+h(\nabla_Xh^{-1}(\beta)),
\end{equation}
for any $X,Y \in C^{\infty}(TM)$ and $\alpha, \beta \in C^{\infty}(T^*M).$

\begin{theorem}
$(TM\oplus T^*M, \hat{h},\hat{\nabla})$ is a generalized quasi-statistical manifold if and only if $(M,h,\nabla)$ is a quasi-statistical manifold, where $\hat{h}$ is precisely $<\cdot,\cdot>$ or $(\cdot,\cdot)$ given by (\ref{m1}) and (\ref{m2}) respectively, according as $h$ is symmetric or skew-symmetric, and $\hat{\nabla}$ is given by (\ref{m5}).
\end{theorem}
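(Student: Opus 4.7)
The plan is to verify the equivalence by a direct computation of $(d^{\hat\nabla}\hat h)(\sigma,\tau,\nu)$ on generic sections $\sigma = X+\alpha$, $\tau = Y+\beta$, $\nu = Z+\gamma$ and to show that, after all cancellations,
\[
(d^{\hat\nabla}\hat h)(\sigma,\tau,\nu) = -\tfrac{1}{2}\,(d^\nabla h)(X,Y,\sharp_h\gamma),
\]
uniformly in the symmetric and skew-symmetric cases. Since $\sharp_h$ is a bundle isomorphism and both sides are tensorial in their arguments, this identity immediately yields $d^{\hat\nabla}\hat h = 0 \iff d^\nabla h = 0$.

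First I would expand $(\hat\nabla_\sigma\hat h)(\tau,\nu) = \sigma(\hat h(\tau,\nu)) - \hat h(\hat\nabla_\sigma\tau,\nu) - \hat h(\tau,\hat\nabla_\sigma\nu)$ using (\ref{m1})--(\ref{m2}) for $\hat h$ and (\ref{m5}) for $\hat\nabla$. The pivotal algebraic identity, obtained by differentiating $\beta(Z) = h(\sharp_h\beta, Z)$, is
\[
h(\nabla_X\sharp_h\beta, Z) = (\nabla_X\beta)(Z) - (\nabla_X h)(\sharp_h\beta, Z).
\]
It collapses the six-term raw expansion of $(\hat\nabla_\sigma\hat h)(\tau,\nu)$ into an expression involving only $(\nabla_X h)(\sharp_h\beta, Z)$ and $(\nabla_X h)(\sharp_h\gamma, Y)$, with a relative sign dictated by whether $h$ is symmetric or skew-symmetric.

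Next I would compute $T^{\hat\nabla}(\sigma,\tau)$ directly from (\ref{m5}) and the bracket $[\cdot,\cdot]_\nabla$. Its $TM$-component is exactly $T^\nabla(X,Y)$, and the identity above rewrites its $T^*M$-component as $-(\nabla_X h)(\sharp_h\beta,\cdot) + (\nabla_Y h)(\sharp_h\alpha,\cdot)$. Pairing with $\nu$ under $\hat h$ and assembling $(d^{\hat\nabla}\hat h)(\sigma,\tau,\nu) = (\hat\nabla_\sigma\hat h)(\tau,\nu) - (\hat\nabla_\tau\hat h)(\sigma,\nu) + \hat h(T^{\hat\nabla}(\sigma,\tau),\nu)$, all $\alpha$- and $\beta$-dependent contributions cancel. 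A final appeal to the (skew-)symmetry of $h$ rewrites $(\nabla_X h)(\sharp_h\gamma, Y) = \pm(\nabla_X h)(Y,\sharp_h\gamma)$ and $\gamma(T^\nabla(X,Y)) = \pm h(T^\nabla(X,Y),\sharp_h\gamma)$, and one checks that the global sign $-\tfrac{1}{2}$ comes out identical in both the symmetric and skew-symmetric settings.

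The main obstacle is purely bookkeeping: there are roughly a dozen terms to manage per case, and one must verify that the $\alpha$- and $\beta$-dependent pieces cancel systematically in both settings. The only non-routine algebraic step is the identity $\flat_h(\nabla_X\sharp_h\beta) - \nabla_X\beta = -(\nabla_X h)(\sharp_h\beta,\cdot)$, which trades second-order derivatives of $h$-lifted forms for the $(\nabla h)$-contractions that appear in $d^\nabla h$; once this is in hand, everything else follows by disciplined summation.
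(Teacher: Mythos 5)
Your proposal is correct and follows essentially the same route as the paper: a direct expansion of $(d^{\hat\nabla}\hat h)(X+\alpha,Y+\beta,Z+\gamma)$ together with the torsion of $\hat\nabla$, collapsing to $-\tfrac{1}{2}(d^{\nabla}h)(X,Y,h^{-1}(\gamma))$ in both the symmetric and skew-symmetric cases, whence the equivalence follows from the non-degeneracy of $h$. The only difference is presentational: you isolate the identity $\flat_h(\nabla_X\sharp_h\beta)-\nabla_X\beta=-(\nabla_Xh)(\sharp_h\beta,\cdot)$ as an explicit lemma, which the paper uses only implicitly inside its long cancellation.
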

\begin{proof}
First notice that the torsion of $\hat{\nabla}$ equals to
$$T^{\hat{\nabla}}(X+\alpha, Y+\beta):=\hat{\nabla}_{X+\alpha}Y+\beta-\hat{\nabla}_{Y+\beta}X+\alpha-[X+\alpha, Y+\beta]_{\nabla}=
$$$$=T^{\nabla}(X,Y)+h(\nabla_Xh^{-1}(\beta)-\nabla_Yh^{-1}(\alpha))-\nabla_X\beta+\nabla_Y\alpha.$$

We have:
$$(d^{\hat{\nabla}}\hat{h})(X+\alpha, Y+\beta,Z+\gamma):=(\hat{\nabla}_{X+\alpha}\hat{h})(Y+\beta,Z+\gamma)-(\hat{\nabla}_{Y+\beta}\hat{h})(X+\alpha,Z+\gamma)+$$$$
+\hat{h}(T^{\hat{\nabla}}(X+\alpha, Y+\beta),Z+\gamma):=$$
$$:=X(\hat{h}(Y+\beta,Z+\gamma))-\hat{h}(\hat{\nabla}_{X+\alpha}Y+\beta,Z+\gamma)-\hat{h}(Y+\beta,\hat{\nabla}_{X+\alpha},Z+\gamma)-$$
$$-Y(\hat{h}(X+\alpha,Z+\gamma))+\hat{h}(\hat{\nabla}_{Y+\beta}X+\alpha,Z+\gamma)+\hat{h}(X+\alpha,\hat{\nabla}_{Y+\beta},Z+\gamma)+$$
$$+\hat{h}(T^{\hat{\nabla}}(X+\alpha, Y+\beta),Z+\gamma):=$$
$$:=-\frac{1}{2}[X(\beta(Z)\pm \gamma(Y))-h(\nabla_Xh^{-1}(\beta),Z)\mp \gamma(\nabla_XY)-\beta(\nabla_XZ)\mp h(\nabla_Xh^{-1}(\gamma),Y)]+$$
$$+\frac{1}{2}[Y(\alpha(Z)\pm \gamma(X))-h(\nabla_Yh^{-1}(\alpha),Z)\mp \gamma(\nabla_YX)-\alpha(\nabla_YZ)\mp h(\nabla_Yh^{-1}(\gamma),X)]-$$
$$-\frac{1}{2}[h(\nabla_Xh^{-1}(\beta),Z)-h(\nabla_Yh^{-1}(\alpha),Z)-(\nabla_X\beta)Z+(\nabla_Y\alpha)Z]\mp\frac{1}{2}\gamma(T^{\nabla}(X,Y)):=$$
$$:=-\frac{1}{2}[\pm X(\gamma(Y))\mp \gamma(\nabla_XY)\mp h(\nabla_Xh^{-1}(\gamma),Y) \mp$$
$$\mp Y(\gamma(X))\pm \gamma(\nabla_YX)\pm h(\nabla_Yh^{-1}(\gamma),X)\pm \gamma(T^{\nabla}(X,Y)):=$$$$:=-\frac{1}{2}(d^{\nabla}h)(X,Y,h^{-1}(\gamma)).$$
Therefore the proof is complete.
\end{proof}

The couple $(\hat{h},\hat{\nabla})$ with $\hat{h}$ given by (\ref{m1}) or (\ref{m2}) respectively (according as $h$ is symmetric or skew-symmetric) and $\hat{\nabla}$ given by (\ref{m5}) will be called the \textit{generalized quasi-statistical structure induced} by $(h,\nabla)$.

A direct computation gives the expression of the \textit{generalized dual quasi-statistical connection} of $\hat{\nabla}$, precisely:

\begin{proposition}\label{p2}
Let $(M,h,\nabla)$ be a quasi-statistical manifold and let $(\hat{h},\hat{\nabla})$ be the generalized quasi-statistical structure induced on $TM\oplus T^*M$. Then the generalized dual quasi-statistical connection, ${\hat {\nabla}}^*$, defined by:
$$\hat h(Y+\beta, {\hat {\nabla}}^*_{X+\alpha}Z+\gamma)=X(\hat h(Y+\beta,Z+\gamma))-\hat h({\hat \nabla}_{X+\alpha}Y+\beta,Z+\gamma),$$
for all $X,Y,Z \in C^{\infty}(TM)$ and $\alpha, \beta, \gamma \in C^{\infty}(T^*M)$, is given by:
$${\hat {\nabla}}^*_{X+\alpha}Z+\gamma=h^{-1}({\nabla}_Xh(Z))+{\nabla}_X \gamma.$$
\end{proposition}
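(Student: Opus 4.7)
The strategy is direct verification. Since $\hat{h}$ is non-degenerate, the defining relation determines $\hat{\nabla}^*$ uniquely, so it suffices to test the proposed formula $h^{-1}(\nabla_X h(Z)) + \nabla_X \gamma$ against an arbitrary section $Y + \beta$ and confirm that the identity closes.

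My plan is to expand both sides of the defining equation along the decomposition $TM \oplus T^*M$ and match them term by term. Substituting (\ref{m1}) or (\ref{m2}) together with (\ref{m5}) on the right-hand side produces expressions of the form $X(\beta(Z))$, $X(\gamma(Y))$, $h(\nabla_X h^{-1}(\beta), Z)$ and $\gamma(\nabla_X Y)$, with signs governed by the choice of $\hat{h}$; substituting the candidate on the left-hand side produces a combination of $\beta(h^{-1}(\nabla_X h(Z)))$ and $(\nabla_X \gamma)(Y)$.

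For the $\gamma$-part, the Leibniz identity $(\nabla_X \gamma)(Y) = X(\gamma(Y)) - \gamma(\nabla_X Y)$ matches the two sides immediately. For the $\beta$-part, the key identity is
$$\beta(h^{-1}(\nabla_X h(Z))) = X(\beta(Z)) - h(\nabla_X h^{-1}(\beta), Z),$$
which I would derive by differentiating $h(h^{-1}(\beta), Z) = \beta(Z)$ using the tensor Leibniz rule for $\nabla$ applied to $h$, and then moving $h^{-1}(\beta)$ between the slots of $h$ via the symmetry or skew-symmetry of $h$ (and the correspondingly inherited symmetry or skew-symmetry of $\nabla h$ in its last two slots). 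The sign picked up when transposing slots of $h$ is exactly compensated by the sign difference between $\langle\cdot,\cdot\rangle$ and $(\cdot,\cdot)$ in (\ref{m1}) and (\ref{m2}), so the same candidate $\hat{\nabla}^*$ works uniformly in the symmetric and skew-symmetric cases.

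The only real obstacle is sign bookkeeping and the interplay between the musical isomorphisms $\flat_h$, $\sharp_h$ and the covariant derivative $\nabla h$; conceptually the argument is completely formal and parallels the calculation already carried out in the proof of Theorem 2.2. It is worth noting that the quasi-statistical hypothesis $d^{\nabla} h = 0$ is not actually used in the derivation: the formula for $\hat{\nabla}^*$ is a consequence of the definitions of $\hat{h}$ and $\hat{\nabla}$ alone.
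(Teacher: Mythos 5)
Your proposal is correct and follows essentially the same route as the paper: both expand the defining relation componentwise using (\ref{m1})/(\ref{m2}) and (\ref{m5}), isolate the $\gamma$-part via the Leibniz rule for $\nabla_X\gamma$, and reduce the $\beta$-part to the identity $\beta\bigl(h^{-1}(\nabla_Xh(Z))\bigr)=X(\beta(Z))-h(\nabla_Xh^{-1}(\beta),Z)$, obtained by differentiating $h(h^{-1}(\beta),Z)=\beta(Z)$ and using the (skew-)symmetry of $h$ and of $\nabla_Xh$ together with non-degeneracy. Your closing remark is also accurate: the paper's proof likewise uses only the definitions of $\hat h$ and $\hat\nabla$, not the hypothesis $d^{\nabla}h=0$.
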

\begin{proof}
From the definition of the generalized dual quasi-statistical connection and using the definition of $\hat{\nabla}$, we get:
$$\hat{h}(Y+\beta,{\hat {\nabla}}^*_{X+\alpha}Z+\gamma)=X(\beta(Z))\pm X(\gamma(Y))-h(\nabla_Xh^{-1}(\beta),Z)\mp \gamma(\nabla_XY),$$
for any $X,Y,Z \in C^{\infty}(TM)$ and $\alpha, \beta, \gamma \in C^{\infty}(T^*M)$.

Let us denote ${\hat {\nabla}}^*_{X+\alpha}Z+\gamma=:V+\eta$. Then we have:
$$\beta(V)\pm \eta(Y)=X(\beta(Z))\pm X(\gamma(Y))-h(\nabla_Xh^{-1}(\beta),Z)\mp \gamma(\nabla_XY),$$
for any $X,Y,Z \in C^{\infty}(TM)$ and $\beta, \gamma \in C^{\infty}(T^*M)$.

Taking $\beta:=0$, we obtain:
$$\eta(Y)=X(\gamma(Y))-\gamma(\nabla_XY):=(\nabla_X\gamma)Y$$
and taking $Y:=0$, we obtain:
$$\beta(V)=X(\beta(Z))-h(\nabla_Xh^{-1}(\beta),Z)$$
which is equivalent to:
$$h(V,h^{-1}(\beta))=X(h(Z,h^{-1}(\beta)))- h(Z,\nabla_Xh^{-1}(\beta)):=(\nabla_Xh)(Z,h^{-1}(\beta))+h(\nabla_XZ,h^{-1}(\beta))$$
and to:
$$h(V)=(\nabla_Xh)(Z,\cdot)+h(\nabla_XZ)=\nabla_Xh(Z)$$
and to:
$$V=h^{-1}(\nabla_Xh(Z)).$$
Therefore the proof is complete.
\end{proof}

\begin{proposition}
Let $(M,h,\nabla)$ be a quasi-statistical manifold. Then ${\hat {\nabla}}^*$ is torsion-free.
\end{proposition}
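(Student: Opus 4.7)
The plan is to substitute the explicit formula for $\hat{\nabla}^*$ from Proposition \ref{p2} into the definition of torsion on $TM\oplus T^*M$ (using the bracket $[\cdot,\cdot]_\nabla$), and show that the two components — tangent and cotangent — vanish separately, the cotangent one by triviality and the tangent one by invoking the quasi-statistical condition $d^{\nabla}h=0$.

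Concretely, for $\sigma=X+\alpha$ and $\tau=Y+\beta$, I would first compute
$$T^{\hat{\nabla}^*}(X+\alpha,Y+\beta)=\hat{\nabla}^*_{X+\alpha}(Y+\beta)-\hat{\nabla}^*_{Y+\beta}(X+\alpha)-[X+\alpha,Y+\beta]_{\nabla}.$$
Using $\hat{\nabla}^*_{X+\alpha}(Y+\beta)=h^{-1}(\nabla_X h(Y))+\nabla_X\beta$ and the symmetric expression with $X$ and $Y$ swapped, together with $[X+\alpha,Y+\beta]_{\nabla}=[X,Y]+\nabla_X\beta-\nabla_Y\alpha$, the $T^*M$-component collapses immediately to zero: the terms $\nabla_X\beta-\nabla_Y\alpha$ appear with opposite signs in the two $\hat{\nabla}^*$ summands and in the bracket. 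So the problem reduces to showing that the $TM$-component
$$h^{-1}\bigl(\nabla_X h(Y)-\nabla_Y h(X)\bigr)-[X,Y]$$
vanishes.

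For the tangent component, the key step is to unpack $\nabla_X h(Y)$, viewed as a $1$-form, via Leibniz: $(\nabla_X h(Y))(Z)=(\nabla_X h)(Y,Z)+h(\nabla_X Y,Z)$. Subtracting the analogous expression with $X,Y$ swapped yields
$$\nabla_X h(Y)-\nabla_Y h(X)=\bigl((\nabla_X h)(Y,\cdot)-(\nabla_Y h)(X,\cdot)\bigr)+h(\nabla_X Y-\nabla_Y X,\cdot).$$
Applying the quasi-statistical condition $d^{\nabla}h=0$, the first parenthesis equals $-h(T^{\nabla}(X,Y),\cdot)=-h(\nabla_X Y-\nabla_Y X-[X,Y],\cdot)$, so the whole right-hand side collapses to $h([X,Y],\cdot)=h([X,Y])$. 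Hence $h^{-1}(\nabla_X h(Y)-\nabla_Y h(X))=[X,Y]$, and the tangent component is zero as required.

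There is no real obstacle here: the proof is essentially a bookkeeping exercise, and the only subtlety is keeping straight the two meanings of $h$ (as a $(0,2)$-tensor and as the musical isomorphism $\flat_h$) when applying the product rule for $\nabla$ on $h(Y)\in C^\infty(T^*M)$. Once that is handled, the cancellation of $[X,Y]$ against $h^{-1}(\nabla_X h(Y)-\nabla_Y h(X))$ follows directly from $d^{\nabla}h=0$, which is precisely the content of the quasi-statistical hypothesis.
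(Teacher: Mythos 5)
Your proof is correct and follows essentially the same route as the paper: substitute the explicit formula for $\hat{\nabla}^*$ from Proposition \ref{p2}, observe that the $T^*M$-component cancels against the bracket, and reduce the $TM$-component via the Leibniz rule to $h^{-1}\bigl((d^{\nabla}h)(X,Y)\bigr)=0$. Nothing further is needed.
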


\begin{proof} For all $X+\alpha, Y+\beta \in C^{\infty}(TM \oplus T^*M)$, we have:
$$T^{{\hat {\nabla}}^*}(X+\alpha,Y+\beta)={\hat {\nabla}}^*_{X+\alpha}Y+\beta - {\hat {\nabla}}^*_{Y+\beta}X+\alpha -[X+\alpha,Y+\beta]_{\nabla}=$$
$$=h^{-1}({\nabla}_X h(Y))- h^{-1}({\nabla}_Y h(X)) - [X,Y]=$$
$$=h^{-1}(({\nabla}_X h)Y- ({\nabla}_Y h)X +h(T^{\nabla}(X,Y)))=$$
$$=h^{-1}(d^{\nabla}(X,Y))=0.$$
\end{proof}

Let $h$ be a non-degenerate, symmetric or skew-symmetric $(0,2)$-tensor field on $M$ and let $\nabla$ be an affine connection on $M$. We have the following:

\begin{theorem}
$(TM\oplus T^*M, \check{h},\hat{\nabla})$ is a generalized quasi-statistical manifold if and only if $(M,h,\nabla)$ is a quasi-statistical manifold, where $\check h$ is given by (\ref{m3}) and $\hat{\nabla}$ is given by (\ref{m5}).
\end{theorem}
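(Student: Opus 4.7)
The plan is to mimic the proof of the earlier theorem by directly expanding $(d^{\hat\nabla}\check h)(X+\alpha, Y+\beta, Z+\gamma)$ and reducing the resulting expression to something proportional to $(d^\nabla h)(X,Y,Z)$, so that the iff follows. The formula for the torsion $T^{\hat\nabla}$ has already been computed in the proof of the previous theorem and can be reused verbatim. Using bilinearity of $\check h=h(X,Y)+h(h^{-1}\alpha,h^{-1}\beta)$, I would split each of the three terms in $d^{\hat\nabla}\check h$ into a \emph{pure-vector piece} involving only $h(\cdot,\cdot)$ on $TM$, and a \emph{pure-covector piece} involving $h(h^{-1}\cdot,h^{-1}\cdot)$.

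For the two covariant-derivative terms, the pure-vector pieces immediately recombine into $(\nabla_X h)(Y,Z)-(\nabla_Y h)(X,Z)$. For the pure-covector pieces, a direct computation (just unfolding the definition of $\hat\nabla$ on the $T^*M$ component) gives $(\hat\nabla_{X+\alpha}\check h)(Y+\beta, Z+\gamma)$ equal to $(\nabla_X h)(Y,Z)+(\nabla_X h)(h^{-1}\beta, h^{-1}\gamma)$; this relies on the fact that $\hat\nabla$ is engineered so that $h^{-1}(\hat\nabla_{X+\alpha}\beta)=\nabla_X(h^{-1}\beta)$, which makes $h^{-1}$ parallel across the two summands.

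The main bookkeeping step is the torsion term. Here I would use the identity
\[
\nabla_X \beta \;=\; (\nabla_X h)(h^{-1}\beta,\cdot) \;+\; h(\nabla_X h^{-1}\beta,\cdot),
\]
obtained by applying $\nabla_X$ to $\beta = h(h^{-1}\beta,\cdot)$. This lets me rewrite the "unpleasant" $\nabla_X\beta-h(\nabla_X h^{-1}\beta)$ (and its $Y,\alpha$ counterpart) appearing in $T^{\hat\nabla}$ as a $(\nabla h)$-type term, so that $\check h(T^{\hat\nabla}(X+\alpha, Y+\beta), Z+\gamma)$ decomposes into $h(T^\nabla(X,Y),Z)$ plus $-(\nabla_X h)(h^{-1}\beta, h^{-1}\gamma)+(\nabla_Y h)(h^{-1}\alpha, h^{-1}\gamma)$. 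These covector pieces then cancel exactly against the covector contributions coming from the two covariant-derivative terms, and what survives is precisely $(d^\nabla h)(X,Y,Z)$.

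I expect the main obstacle to be purely notational: keeping track of which pairings of $h$ and $h^{-1}$ arise, and making sure the cancellation between $\nabla\beta$ and $\nabla h^{-1}\beta$ is carried out consistently regardless of whether $h$ is symmetric or skew-symmetric (the identity above does not use any symmetry of $h$). Once the identity $(d^{\hat\nabla}\check h)(X+\alpha, Y+\beta, Z+\gamma)=(d^\nabla h)(X,Y,Z)$ is established, the equivalence is immediate: the right-hand side depends only on $X,Y,Z$, and taking $\alpha=\beta=\gamma=0$ shows $d^\nabla h=0$ whenever $d^{\hat\nabla}\check h=0$, while the converse is clear from the equality.
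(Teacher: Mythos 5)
Your proposal is correct and follows essentially the same route as the paper: expand $(d^{\hat\nabla}\check h)(X+\alpha,Y+\beta,Z+\gamma)$, observe that the vector parts assemble into $(d^\nabla h)(X,Y,Z)$, and check that the covector contributions of the two covariant-derivative terms (each equal to $(\nabla_Xh)(h^{-1}\beta,h^{-1}\gamma)$ up to relabelling) cancel exactly against those of the torsion term. The one point worth flagging is that your cancellation, mediated by the identity $\nabla_X\beta=(\nabla_Xh)(h^{-1}\beta,\cdot)+h(\nabla_Xh^{-1}\beta,\cdot)$ together with $h(h^{-1}\omega,h^{-1}\gamma)=\omega(h^{-1}\gamma)$, never invokes the symmetry or skew-symmetry of $h$, whereas the paper's bookkeeping rewrites the analogous pairing as $\pm\gamma(h^{-1}\omega)$ and so does use it; your argument therefore establishes the identity $(d^{\hat\nabla}\check h)(X+\alpha,Y+\beta,Z+\gamma)=(d^\nabla h)(X,Y,Z)$, and hence the theorem, for an arbitrary non-degenerate $h$.
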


\begin{proof}
We have:
$$(d^{\hat{\nabla}}\check{h})(X+\alpha, Y+\beta,Z+\gamma):=(\hat{\nabla}_{X+\alpha}\check{h})(Y+\beta,Z+\gamma)-(\hat{\nabla}_{Y+\beta}\check{h})(X+\alpha,Z+\gamma)+$$$$
+\check{h}(T^{\hat{\nabla}}(X+\alpha, Y+\beta),Z+\gamma):=$$
$$:=X(\check{h}(Y+\beta,Z+\gamma))-\check{h}(\hat{\nabla}_{X+\alpha}Y+\beta,Z+\gamma)-\check{h}(Y+\beta,\hat{\nabla}_{X+\alpha},Z+\gamma)-$$
$$-Y(\check{h}(X+\alpha,Z+\gamma))+\check{h}(\hat{\nabla}_{Y+\beta}X+\alpha,Z+\gamma)+\check{h}(X+\alpha,\hat{\nabla}_{Y+\beta},Z+\gamma)+$$
$$+\check{h}(T^{\hat{\nabla}}(X+\alpha, Y+\beta),Z+\gamma):=$$
$$:=X(h(Y,Z))+X(\beta(h^{-1}(\gamma)))-\check h(\nabla_X Y +h(\nabla_X h^{-1}(\beta)), Z+\gamma)-$$
$$-\check h (Y+\beta,{\nabla}_X Z + h({\nabla}_X h^{-1}(\gamma)))-$$
$$-Y(h(X,Z))-Y(\alpha(h^{-1}(\gamma)))+\check h(\nabla_Y X +h(\nabla_Y h^{-1}(\alpha)), Z+\gamma)+$$
$$+\check h (X+\alpha,{\nabla}_Y Z + h({\nabla}_Y h^{-1}(\gamma)))+h(T^{\nabla}(X,Y),Z) \pm \gamma(({\nabla}_X h^{-1})\beta-({\nabla}_Y h^{-1})\alpha):=$$
$$:=(d^{\nabla}h)(X,Y,Z)+({\nabla}_X\beta)h^{-1}(\gamma)-({\nabla}_Y\alpha)h^{-1}(\gamma) \pm \gamma(h^{-1}({\nabla}_Y\alpha) - h^{-1}({\nabla}_X\beta)):=$$
$$:=(d^{\nabla}h)(X,Y,Z),$$
where the sign $+$ is for $h$ symmetric, $-$ is for $h$ skew-symmetric.
Therefore the proof is complete.
\end{proof}

\begin{proposition}\label{p3}
Let $(M,h,\nabla)$ be a quasi-statistical manifold and let $(\check{h},\hat{\nabla})$ be the generalized quasi-statistical structure induced on $TM\oplus T^*M$. Then the generalized dual quasi-statistical connection, ${{\hat {\nabla}}^*}_{\check h}$, defined by:
$$ \check h (Y+\beta, ({{{\hat{\nabla}}^*}}_{\check h})_{X+\alpha} Z+\gamma)=X(\check h(Y+\beta,Z+\gamma))-\check h({\hat \nabla}_{X+\alpha}Y+\beta,Z+\gamma),$$
for all $X,Y,Z \in C^{\infty}(TM)$ and $\alpha, \beta, \gamma \in C^{\infty}(T^*M)$, is given by:
$$({{{\hat{\nabla}}^*}}_{\check h})_{X+\alpha}Z+\gamma=h^{-1}({\nabla}_Xh(Z))+{\nabla}_X \gamma.$$
Therefore:
$${{{\hat{\nabla}}^*}}_{\check h}={\hat{\nabla}}^*.$$
\end{proposition}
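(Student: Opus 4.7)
The plan is to mimic the argument of Proposition \ref{p2} very closely. I write $({{\hat\nabla}^*}_{\check h})_{X+\alpha}Z+\gamma =: V+\eta$ and expand the defining identity
$$\check h(Y+\beta, V+\eta)=X(\check h(Y+\beta,Z+\gamma))-\check h({\hat\nabla}_{X+\alpha}Y+\beta,Z+\gamma)$$
using the formulas $\check h(X+\alpha, Y+\beta)=h(X,Y)+h(h^{-1}(\alpha), h^{-1}(\beta))$ from (\ref{m3}) and ${\hat\nabla}_{X+\alpha}Y+\beta=\nabla_XY+h(\nabla_Xh^{-1}(\beta))$ from (\ref{m5}). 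The key point is that both $\check h$ and $\hat\nabla$ split cleanly into a vector piece (in $Y,Z$) and a covector piece (in $\beta,\gamma$), so the defining relation decouples.

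Next, I would extract $V$ and $\eta$ separately by specialising the test element $Y+\beta$. Setting $\beta=0$, the covector piece drops out and I am left with $h(Y,V)=X(h(Y,Z))-h(\nabla_XY,Z)$, an identity in $Y$; rewriting the right-hand side as $(\nabla_Xh(Z))(Y)$ (using the sign convention $\flat_h(Z)(Y)=h(Z,Y)$ and the symmetry or skew-symmetry of $h$, which in both cases yields the same conclusion) gives $V=h^{-1}(\nabla_Xh(Z))$. Then setting $Y=0$, the vector piece drops out and I am left with $h(h^{-1}(\beta),h^{-1}(\eta))=X(h(h^{-1}(\beta),h^{-1}(\gamma)))-h(\nabla_Xh^{-1}(\beta),h^{-1}(\gamma))$, an identity in $\beta$; the analogous manipulation gives $\eta=\nabla_X\gamma$.

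Comparing with the formula in Proposition \ref{p2}, this yields $({{\hat\nabla}^*}_{\check h})_{X+\alpha}Z+\gamma = h^{-1}(\nabla_Xh(Z))+\nabla_X\gamma = {\hat\nabla}^*_{X+\alpha}Z+\gamma$, which is the stated equality.

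The main obstacle is purely bookkeeping: one has to track the $\pm$ signs that arise because $h(X,Y)$ and $h(Y,X)$ differ by $\pm 1$ according to the symmetry of $h$. The reason the conclusion is uniform in the two cases is that the sign enters both in converting between $h(Y,V)$ and the 1-form $\nabla_Xh(Z)$ and in converting between $h(h^{-1}(\beta),h^{-1}(\eta))$ and $\eta$, so it cancels. Once these parallel cancellations are carried out, the answer coincides with the expression found in Proposition \ref{p2}, and the asserted equality ${{\hat\nabla}^*}_{\check h}={\hat\nabla}^*$ follows.
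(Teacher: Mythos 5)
Your proposal is correct and follows essentially the same route as the paper's proof: introduce $V+\eta$, expand the defining identity using (\ref{m3}) and (\ref{m5}), and specialise to $\beta=0$ and $Y=0$ to read off $V=h^{-1}(\nabla_Xh(Z))$ and $\eta=\nabla_X\gamma$, with the symmetric/skew-symmetric signs cancelling exactly as you describe. No gaps.
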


\begin{proof}
We get:
$$\check{h}(Y+\beta,({{{\hat{\nabla}}^*}}_{\check h})_{X+\alpha}Z+\gamma)=X(h(Y,Z))+X(\beta(h^{-1}(\gamma)))-h(\nabla_X Y,Z) \mp \gamma(\nabla_Xh^{-1}(\beta))=$$
$$=X(h(Y,Z))-h(\nabla_X Y,Z)\mp \gamma(\nabla_Xh^{-1}(\beta))\pm X(\gamma(h^{-1}(\beta))),$$
for any $X,Y,Z \in C^{\infty}(TM)$ and $\alpha, \beta, \gamma \in C^{\infty}(T^*M)$.

Let us denote $({{{\hat{\nabla}}^*}}_{\check h})_{X+\alpha}Z+\gamma=:V+\eta$. Then we have:
$$h(Y,V)\pm \eta( h^{-1}(\beta))=X(h(Y,Z))-h({\nabla}_X Y,Z)\pm({\nabla}_X\gamma)h^{-1}(\beta),$$
for any $X,Y,Z \in C^{\infty}(TM)$ and $\beta \in C^{\infty}(T^*M)$.

Taking $Y:=0$, we obtain:
$$\eta(h^{-1}(\beta))=\nabla_Xh^{-1}(\beta)$$
and taking $\beta:=0$, we obtain:
$$h(Y,V)=(\nabla_Xh)(Y,Z)+h(Y,{\nabla}_X Z)$$
which is equivalent to:
$$h(V)=(\nabla_Xh)(Z,\cdot)+h(\nabla_XZ)=\nabla_Xh(Z)$$
and to:
$$V=h^{-1}({\nabla}_Xh(Z)).$$
Therefore the proof is complete.
\end{proof}

Given an affine connection $D$ on $TM \oplus T^*M$, we define the curvature operator of $D$, $R^D:C^{\infty}(TM \oplus T^*M)\times C^{\infty}(TM \oplus T^*M) \times C^{\infty}(TM \oplus T^*M) \rightarrow C^{\infty}(TM \oplus T^*M)$, on $\sigma, \tau, \nu \in C^{\infty}(TM \oplus T^*M)$, as in the following:
$$R^D(\sigma,\tau)\nu=(D_{\sigma}D_{\tau}-D_{\tau}D_{\sigma}-D_{[\sigma,\tau]_\nabla})\nu,$$
where $\nabla$ is a given connection on $M$.

\begin{proposition} Let $(M,h,\nabla)$ be a quasi-statistical manifold and let $(\hat h,\hat {\nabla})$ be the generalized quasi-statistical structure induced on $TM \oplus T^*M$. Then the curvature operators of $\hat {\nabla}$ and ${\hat {\nabla}}^*$ are given respectively by:
$$R^{\hat {\nabla}}(X+\alpha,Y+\beta)Z+\gamma=R^{\nabla}(X,Y)Z+h(R^{\nabla}(X,Y)h^{-1}(\gamma))$$
$$R^{{\hat {\nabla}}^*}(X+\alpha,Y+\beta)Z+\gamma=h^{-1}(R^{\nabla}(X,Y)h(Z))+R^{\nabla}(X,Y)\gamma,$$
where $X,Y,Z \in C^{\infty}(TM)$, $\alpha, \beta, \gamma \in  C^{\infty}(T^*M)$ and $R^{\nabla}$ is the curvature operator of $\nabla$.
In particular, $\hat {\nabla}$ and its dual ${\hat {\nabla}}^*$ are flat if and only if $\nabla$ is flat.
\end{proposition}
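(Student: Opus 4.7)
The plan is to exploit a direct-sum structure present in both $\hat{\nabla}$ and $\hat{\nabla}^*$. First I would observe that $\hat{\nabla}_{X+\alpha}(Y+\beta)=\nabla_X Y+\flat_h(\nabla_X\sharp_h\beta)$ depends only on the vector part $X$ of the differentiating argument, and that its $TM$-component depends only on $Y$ while its $T^*M$-component depends only on $\beta$. In other words, $\hat{\nabla}$ acts as $\nabla$ on the $TM$-summand and as the conjugate connection $\flat_h\circ\nabla\circ\sharp_h$ on the $T^*M$-summand, with no mixing between the two. By Proposition \ref{p2}, exactly the same pattern holds for $\hat{\nabla}^*$, with $\nabla$ on $T^*M$ and $\sharp_h\circ\nabla\circ\flat_h$ on $TM$.

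Next I would expand
$$R^{\hat{\nabla}}(\sigma,\tau)\nu=\hat{\nabla}_{\sigma}\hat{\nabla}_{\tau}\nu-\hat{\nabla}_{\tau}\hat{\nabla}_{\sigma}\nu-\hat{\nabla}_{[\sigma,\tau]_{\nabla}}\nu$$
on $\sigma=X+\alpha$, $\tau=Y+\beta$, $\nu=Z+\gamma$. Because only the vector parts of $\sigma$ and $\tau$ matter in the first slot, and the $TM$-component of $[\sigma,\tau]_{\nabla}$ is exactly $[X,Y]$, the three terms telescope cleanly after using $\sharp_h\circ\flat_h=\operatorname{id}_{TM}$: the $TM$-contribution reduces to $R^{\nabla}(X,Y)Z$, and the $T^*M$-contribution reduces to $\flat_h\bigl(R^{\nabla}(X,Y)\sharp_h\gamma\bigr)$. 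The parallel computation for $\hat{\nabla}^*$, now invoking $\flat_h\circ\sharp_h=\operatorname{id}_{T^*M}$, yields the companion formula.

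For the flatness equivalence, both right-hand sides vanish as soon as $R^{\nabla}=0$, which gives one implication for both $\hat{\nabla}$ and $\hat{\nabla}^*$ at once. Conversely, setting $\gamma=0$ in the formula for $R^{\hat{\nabla}}$ (respectively $Z=0$ in the formula for $R^{\hat{\nabla}^*}$) and using that $\flat_h$ and $\sharp_h$ are bijections forces $R^{\nabla}(X,Y)Z=0$ for all $X,Y,Z\in C^{\infty}(TM)$. The only real care required is in tracking the inner cancellations $\sharp_h\flat_h=\operatorname{id}_{TM}$ and $\flat_h\sharp_h=\operatorname{id}_{T^*M}$ when iterating the two summands; the computation is routine but easy to mis-bracket, and this bookkeeping is the only mild obstacle.
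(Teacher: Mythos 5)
Your proof is correct and follows essentially the same route as the paper: both amount to the observation that $\hat{\nabla}$ (resp.\ $\hat{\nabla}^*$) splits as $\nabla$ on one summand and the conjugate connection $\flat_h\circ\nabla\circ\sharp_h$ (resp.\ $\sharp_h\circ\nabla\circ\flat_h$) on the other, after which the curvature telescopes term by term exactly as in the paper's displayed computation. Only a cosmetic slip in the last step: setting $Z=0$ in $R^{\hat{\nabla}^*}$ gives $R^{\nabla}(X,Y)\gamma=0$ (the curvature of the dual connection on one-forms), from which $R^{\nabla}(X,Y)Z=0$ follows via $(R^{\nabla}(X,Y)\gamma)(Z)=-\gamma(R^{\nabla}(X,Y)Z)$ rather than directly; alternatively just set $\gamma=0$ in the formula for $R^{\hat{\nabla}}$, which you already do.
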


\begin{proof}
Let us compute:
$${{{\hat{\nabla}}}}_{X+\alpha} {{{\hat{\nabla}}}}_{Y+\beta} Z+\gamma - {{{\hat{\nabla}}}}_{Y+\beta}{{{\hat{\nabla}}}}_{X+\alpha}Z+\gamma - {{{\hat{\nabla}}}}_{[X+\alpha,Y+\beta]_{\nabla}}Z+\gamma:=$$
$$:={{{\hat{\nabla}}}}_{X+\alpha}({\nabla}_Y Z+h(\nabla_Yh^{-1}(\gamma)))-{{{\hat{\nabla}}}}_{Y+\beta}({\nabla}_X Z+h(\nabla_Xh^{-1}(\gamma)))-$$
$$-{\nabla}_{[X,Y]}Z-h(\nabla_{[X,Y]}h^{-1}(\gamma)):=$$
$$:=\nabla_X{\nabla}_Y Z+h({\nabla}_X \nabla_Yh^{-1}(\gamma)) -\nabla_Y{\nabla}_X Z- h({\nabla}_Y \nabla_Xh^{-1}(\gamma))- $$
$$-{\nabla}_{[X,Y]}Z-h(\nabla_{[X,Y]}h^{-1}(\gamma)):=$$
$$:= R^{\nabla}(X,Y)Z+h (R^{\nabla} (X,Y )h^{-1}(\gamma))$$
and:
$${{{\hat{\nabla}}^*}}_{X+\alpha} {{{\hat{\nabla}}^*}}_{Y+\beta} Z+\gamma -  {{{\hat{\nabla}}^*}}_{Y+\beta}{{{\hat{\nabla}}^*}}_{X+\alpha}Z+\gamma - {{{\hat{\nabla}}^*}}_{[X+\alpha,Y+\beta]_{\nabla}}Z+\gamma:=$$
$$:={{{\hat{\nabla}}^*}}_{X+\alpha}(h^{-1}(\nabla_Yh(Z))+{\nabla}_Y \gamma)-{{{\hat{\nabla}}^*}}_{Y+\beta}(h^{-1}(\nabla_Xh(Z))+{\nabla}_X \gamma)-$$
$$-h^{-1}(\nabla_{[X,Y]}h(Z))-{\nabla}_{[X,Y]}\gamma:=$$
$$:=h^{-1}({\nabla}_X \nabla_Yh(Z))+\nabla_X{\nabla}_Y \gamma - h^{-1}({\nabla}_Y \nabla_Xh(Z))-\nabla_Y{\nabla}_X \gamma- $$
$$-h^{-1}(\nabla_{[X,Y]}h(Z))-{\nabla}_{[X,Y]}\gamma:=$$
$$:= h^{-1} (R^{\nabla} (X,Y )h(Z))+ R^{\nabla}(X,Y) \gamma.$$
Therefore the proof is complete.
\end{proof}

\begin{proposition}
The structures $\hat{J}_c$ and $\hat{J}_p$ are $\hat{\nabla}$-parallel and ${\hat{\nabla}}^*$-parallel.
\end{proposition}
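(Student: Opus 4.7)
The plan is to unwind the parallelism condition to a direct computation, since both $\hat{J}_c$ and $\hat{J}_p$ are built from the same data ($h$ and $h^{-1}$) as the connections, and the formulas for $\hat{\nabla}$ in (\ref{m5}) and for ${\hat{\nabla}}^*$ from Proposition~\ref{p2} are precisely the ones that commute with raising/lowering by $h$. Concretely, $\hat{J}$-parallelism is equivalent to $\hat{\nabla}(\hat{J}\sigma) = \hat{J}(\hat{\nabla}\sigma)$, and similarly with ${\hat{\nabla}}^*$, so there is nothing to do beyond checking that these two operations agree on a general section $Y+\beta \in C^{\infty}(TM \oplus T^*M)$.

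First I would treat $\hat{\nabla}$. Writing $\hat{J}_{\mp}(Y+\beta) = \mp h^{-1}(\beta) + h(Y)$ with the first summand in $TM$ and the second in $T^*M$, the definition of $\hat{\nabla}$ gives $\hat{\nabla}_{X+\alpha}(\hat{J}_{\mp}(Y+\beta)) = \mp \nabla_X h^{-1}(\beta) + h(\nabla_X h^{-1}(h(Y)))$, and the inner $h^{-1}\circ h$ collapses to the identity; on the other side, $\hat{J}_{\mp}(\hat{\nabla}_{X+\alpha}(Y+\beta))$ yields the same expression after the analogous cancellation $h^{-1}\circ h = \mathrm{id}$ acting on $h(\nabla_X h^{-1}(\beta))$. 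So both structures are $\hat{\nabla}$-parallel simultaneously.

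Next I would run the same argument for ${\hat{\nabla}}^*$, using the formula ${\hat{\nabla}}^*_{X+\alpha}(Z+\gamma) = h^{-1}(\nabla_X h(Z)) + \nabla_X \gamma$ from Proposition~\ref{p2}. Applying ${\hat{\nabla}}^*_{X+\alpha}$ to $\hat{J}_{\mp}(Y+\beta) = \mp h^{-1}(\beta) + h(Y)$ produces $\mp h^{-1}(\nabla_X \beta) + \nabla_X h(Y)$ after $h \circ h^{-1} = \mathrm{id}$ cancels in the $TM$-piece; reversing the order gives the same thing. The two computations are dual in the sense that the $h^{-1}\circ h$ cancellation for $\hat{\nabla}$ is replaced by $h\circ h^{-1}$ for ${\hat{\nabla}}^*$.

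The only potential obstacle is purely notational, namely tracking which summand lives in $TM$ versus $T^*M$ after $\hat{J}$ has swapped the components, since the connections $\hat{\nabla}$ and ${\hat{\nabla}}^*$ act by different formulas on the two pieces. Once that bookkeeping is set up, no real computation remains, and the four parallelisms (two structures times two connections) all reduce to the tautological identities $h^{-1}\circ h = \mathrm{id}$ and $h \circ h^{-1} = \mathrm{id}$. This also clarifies conceptually why the quasi-statistical hypothesis on $(M,h,\nabla)$ is not needed for parallelism, only for integrability.
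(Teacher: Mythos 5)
Your computation is correct and is essentially the paper's own proof: both evaluate $(\hat{\nabla}_{X+\alpha}\hat{J}_\mp)(Y+\beta)$ and $({\hat{\nabla}}^*_{X+\alpha}\hat{J}_\mp)(Y+\beta)$ directly from formula (\ref{m5}) and Proposition \ref{p2}, with the vanishing coming from the cancellations $h^{-1}\circ h=\mathrm{id}$ and $h\circ h^{-1}=\mathrm{id}$ after correctly tracking which summand lies in $TM$ versus $T^*M$. Your closing remark that the quasi-statistical hypothesis is irrelevant here is also accurate, since only the defining formulas of the two connections are used.
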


\begin{proof} In this proof we will shortly denote $\hat{J}_\mp$ for $\hat{J}_c=:\hat{J}_-$ and $\hat{J}_p=:\hat{J}_+$.
Let us compute:
$$(\hat{\nabla}_{X+\alpha}\hat{J}_\mp)Y+\beta:=\hat{\nabla}_{X+\alpha}(\mp h^{-1}(\beta)+h(Y))-\hat{J}_\mp(\hat{\nabla}_{X+\alpha}Y+\beta):=
$$$$:=\mp\nabla_Xh^{-1}(\beta)+h(\nabla_Xh^{-1}(h(Y)))-\hat{J}_\mp(\nabla_XY+h(\nabla_Xh^{-1}(\beta))):=$$
$$:=\mp \nabla_Xh^{-1}(\beta)+h(\nabla_XY)\pm h^{-1}(h(\nabla_Xh^{-1}(\beta)))-h(\nabla_XY)=0;$$
moreover:
$$({\hat{\nabla}}^*_{X+\alpha}\hat{J}_\mp)Y+\beta:={\hat{\nabla}}^*_{X+\alpha}(\mp h^{-1}(\beta)+h(Y))-\hat{J}_\mp({\hat{\nabla}}^*_{X+\alpha}Y+\beta):=
$$
$$:=\mp h^{-1}(\nabla_X \beta)+\nabla_X h(Y)-\hat{J}_\mp (h^{-1} (\nabla_Xh(Y))+\nabla_X \beta):=$$
$$:=\mp h^{-1}(\nabla_X \beta)+\nabla_X h(Y)-\nabla_X h(Y) \pm h^{-1}(\nabla_X \beta)=0,$$
for any $X,Y \in C^{\infty}(TM)$ and $\alpha, \beta \in C^{\infty}(T^*M).$
Therefore the proof is complete.
\end{proof}

\subsection{Generalized quasi-statistical structures induced by torsion-free connections}

Another affine connection on the generalized tangent bundle $TM\oplus T^*M$ is naturally defined by an affine connection $\nabla$ on $M$ by:
\begin{equation}\label{m4}
\check{\nabla}_{X+\alpha}Y+\beta:=\nabla_XY+\nabla_X\beta,
\end{equation}
for any $X,Y \in C^{\infty}(TM)$ and $\alpha, \beta \in C^{\infty}(T^*M).$

\begin{remark}\label{r}
One can check that if $h$ is a non-degenerate $(0,2)$-tensor field on $M$ which is  $\nabla$-parallel, then the connections $\hat{\nabla}$ and $\check{\nabla}$ coincide (since we have $\hat{\nabla}_{X+\alpha}Y+\beta-\check{\nabla}_{X+\alpha}Y+\beta=-(\nabla_Xh)(h^{-1}(\beta),\cdot)$, for any $X,Y \in C^{\infty}(TM)$ and $\alpha, \beta \in C^{\infty}(T^*M)$). In particular, $\hat{\nabla}^*=\hat \nabla=\check \nabla$.
\end{remark}

We have the following:

\begin{proposition}\label{p1}
$(TM\oplus T^*M, \hat{h},\check{\nabla})$ is a generalized quasi-statistical manifold if and only if $\nabla$ is torsion-free, where $\hat{h}$ is precisely $<\cdot,\cdot>$ or $(\cdot,\cdot)$ given by (\ref{m1}) and (\ref{m2}) respectively and $\check{\nabla}$ is given by (\ref{m4}).
\end{proposition}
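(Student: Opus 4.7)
The plan is to reduce $d^{\check\nabla}\hat h$ to a pure torsion term, by showing that $\check\nabla$ is \emph{metric} with respect to both $\langle\cdot,\cdot\rangle$ and $(\cdot,\cdot)$. The claimed equivalence will then follow immediately.

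The first step is to compute the torsion of $\check\nabla$. Writing out $T^{\check\nabla}(X+\alpha,Y+\beta)=\check\nabla_{X+\alpha}(Y+\beta)-\check\nabla_{Y+\beta}(X+\alpha)-[X+\alpha,Y+\beta]_{\nabla}$ and using the definition (\ref{m4}) together with the bracket $[X+\alpha,Y+\beta]_{\nabla}=[X,Y]+\nabla_X\beta-\nabla_Y\alpha$, the $1$-form terms $\nabla_X\beta$ and $\nabla_Y\alpha$ cancel pairwise, leaving simply $T^{\check\nabla}(X+\alpha,Y+\beta)=T^{\nabla}(X,Y)$, regarded as an element of $TM\subset TM\oplus T^*M$.

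The second and key step is to show that $\check\nabla\hat h=0$ for both choices $\hat h=\langle\cdot,\cdot\rangle$ and $\hat h=(\cdot,\cdot)$. Expanding $(\check\nabla_{X+\alpha}\hat h)(Y+\beta,Z+\gamma)$ via the Leibniz rule, the only nonzero contributions come from the duality pairings $\beta(Z)$ and $\gamma(Y)$ (with a relative sign $+$ or $-$ depending on whether $\hat h$ is $\langle\cdot,\cdot\rangle$ or $(\cdot,\cdot)$). Using the defining identity $(\nabla_X\beta)(Z)=X(\beta(Z))-\beta(\nabla_X Z)$, and the same for $\gamma(Y)$, every such term telescopes to zero. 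Hence $(\check\nabla_{X+\alpha}\hat h)\equiv0$ in both cases; I expect this to be the least mechanical step, mainly one of careful bookkeeping of the signs in (\ref{m1}) and (\ref{m2}).

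Once these two computations are in hand, the formula for $d^{\check\nabla}\hat h$ collapses to its torsion piece:
\[
(d^{\check\nabla}\hat h)(X+\alpha,Y+\beta,Z+\gamma)=\hat h(T^{\nabla}(X,Y),Z+\gamma)=\mp\tfrac{1}{2}\gamma(T^{\nabla}(X,Y)),
\]
with sign according to whether $\hat h$ is the indefinite metric or the symplectic form. Since $\gamma$ ranges over all $1$-forms, this vanishes identically if and only if $T^{\nabla}(X,Y)=0$ for all $X,Y$, which is precisely the torsion-freeness of $\nabla$. The main obstacle is really just verifying the metricity of $\check\nabla$ carefully in both cases without sign errors; the rest is forced by Step~1.
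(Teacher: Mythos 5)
Your proposal is correct and follows essentially the same route as the paper: both compute $T^{\check\nabla}(X+\alpha,Y+\beta)=T^{\nabla}(X,Y)$ and then observe that all covariant-derivative terms in $d^{\check\nabla}\hat h$ telescope via $(\nabla_X\beta)(Z)=X(\beta(Z))-\beta(\nabla_X Z)$, leaving only $\mp\tfrac12\gamma(T^{\nabla}(X,Y))$. Your only organizational difference is to isolate the metricity $\check\nabla\hat h=0$ as an explicit intermediate step, which the paper carries out implicitly inside one long expansion.
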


\begin{proof}
First notice that the torsion of $\check{\nabla}$ equals to
$$T^{\check{\nabla}}(X+\alpha, Y+\beta):=\check{\nabla}_{X+\alpha}Y+\beta-\check{\nabla}_{Y+\beta}X+\alpha-[X+\alpha, Y+\beta]_{\nabla}=T^{\nabla}(X,Y).$$

We have:
$$(d^{\check{\nabla}}\hat{h})(X+\alpha, Y+\beta,Z+\gamma):=(\check{\nabla}_{X+\alpha}\hat{h})(Y+\beta,Z+\gamma)-(\check{\nabla}_{Y+\beta}\hat{h})(X+\alpha,Z+\gamma)+$$$$
+\hat{h}(T^{\check{\nabla}}(X+\alpha, Y+\beta),Z+\gamma):=$$
$$:=X(\hat{h}(Y+\beta,Z+\gamma))-\hat{h}(\check{\nabla}_{X+\alpha}Y+\beta,Z+\gamma)-\hat{h}(Y+\beta,\check{\nabla}_{X+\alpha},Z+\gamma)-$$
$$-Y(\hat{h}(X+\alpha,Z+\gamma))+\hat{h}(\check{\nabla}_{Y+\beta}X+\alpha,Z+\gamma)+\hat{h}(X+\alpha,\check{\nabla}_{Y+\beta},Z+\gamma)+$$
$$+\hat{h}(T^{\check{\nabla}}(X+\alpha, Y+\beta),Z+\gamma):=$$
$$:=-\frac{1}{2}[X(\beta(Z)\pm \gamma(Y))-(\nabla_X\beta)Z\mp \gamma(\nabla_XY)-\beta(\nabla_XZ)\mp (\nabla_X\gamma)Y]+$$
$$+\frac{1}{2}[Y(\alpha(Z)\pm \gamma(X))-(\nabla_Y\alpha)Z\mp \gamma(\nabla_YX)-\alpha(\nabla_YZ)\mp (\nabla_Y\gamma)X]\mp\frac{1}{2}\gamma(T^{\nabla}(X,Y)):=$$
$$:=-\frac{1}{2}[X(\beta(Z))\pm X(\gamma(Y))-X(\beta(Z))+\beta(\nabla_XZ) \mp\gamma(\nabla_XY)-\beta(\nabla_XZ)\mp X(\gamma(Y))\pm \gamma(\nabla_XY)]+$$
$$+\frac{1}{2}[Y(\alpha(Z))\pm Y(\gamma(X))-Y(\alpha(Z))+\alpha(\nabla_YZ)\mp \gamma(\nabla_YX)-\alpha(\nabla_YZ)\mp Y(\gamma(X))\pm \gamma(\nabla_YX)]\mp$$
$$\mp\frac{1}{2}\gamma(T^{\nabla}(X,Y))=\mp\frac{1}{2}\gamma(T^{\nabla}(X,Y)).$$
Therefore the proof is complete.
\end{proof}

\begin{proposition}\label{p4}
Let $\nabla$ be a torsion-free affine connection on $M$ and let 
\linebreak 
$(<\cdot,\cdot>, \check{\nabla})$ and $((\cdot,\cdot),\check{\nabla})$ be the canonical generalized quasi-statistical structures defined in Proposition \ref{p1}. Then $\check{\nabla}$ and its generalized dual quasi-statistical connection, ${\check{\nabla}}^*$, coincide.
\end{proposition}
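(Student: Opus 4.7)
My plan is to follow the same pattern used in the proofs of Propositions \ref{p2} and \ref{p3}, but to exploit a simplification that arises in this torsion-free setting: the connection $\check{\nabla}$ is actually compatible with $\hat h$, and compatibility forces self-duality.

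First, I would unwind the definition by writing ${\check{\nabla}}^*_{X+\alpha}(Z+\gamma)=:V+\eta$ and expanding
$$\hat h(Y+\beta, V+\eta)=X(\hat h(Y+\beta,Z+\gamma))-\hat h(\check{\nabla}_{X+\alpha}(Y+\beta),Z+\gamma)$$
using formula (\ref{m4}) together with the explicit forms (\ref{m1}) or (\ref{m2}) for $\hat h$. The right-hand side becomes, up to the factor $-\frac12$, a combination of the four terms $X(\beta(Z))$, $X(\gamma(Y))$, $(\nabla_X\beta)(Z)$ and $\gamma(\nabla_X Y)$ (with signs $\pm$ depending on whether we use $<\cdot,\cdot>$ or $(\cdot,\cdot)$). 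Distributing $\nabla_X$ across $\beta(Z)$ and $\gamma(Y)$ via the Leibniz rule cancels the first two terms against the last two, leaving only $\beta(\nabla_X Z)+(\nabla_X\gamma)(Y)$ with an overall factor of $\mp\tfrac12$, which is exactly $\hat h(Y+\beta,\,\nabla_X Z+\nabla_X\gamma)$.

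From here I would isolate $V$ and $\eta$ by the usual device of specializing the test section: taking $\beta:=0$ forces $\eta(Y)=(\nabla_X\gamma)(Y)$, i.e.\ $\eta=\nabla_X\gamma$, and taking $Y:=0$ forces $\beta(V)=\beta(\nabla_X Z)$ for every $\beta$, i.e.\ $V=\nabla_X Z$. Comparing with (\ref{m4}) gives ${\check{\nabla}}^*_{X+\alpha}(Z+\gamma)=\nabla_X Z+\nabla_X\gamma=\check{\nabla}_{X+\alpha}(Z+\gamma)$, as desired.

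The computation really says nothing more than that $\check{\nabla}\hat h=0$, because the cancellation that occurs in the first step is exactly the statement of compatibility: the connection $\check{\nabla}$ differentiates $\beta(Z)$ (resp.\ $\gamma(Y)$) via the Leibniz rule through $\hat h$. I would therefore phrase the argument, if desired, as the two-line remark that $\check{\nabla}\hat h=0$ by a direct Leibniz-rule check, and that a non-degenerate parallel $(0,2)$-tensor always makes a connection self-dual. I do not expect any genuine obstacle; the only care needed is to track the sign conventions $\pm$ for the symmetric versus skew-symmetric cases in parallel, which is precisely how Theorem~2.6 and Proposition~\ref{p3} handled the same dichotomy.
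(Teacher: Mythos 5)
Your proof is correct and follows essentially the same route as the paper's: the same decomposition ${\check{\nabla}}^*_{X+\alpha}(Z+\gamma)=:V+\eta$, the same expansion of the defining identity, and the same specializations $\beta:=0$ and $Y:=0$ to isolate $\eta=\nabla_X\gamma$ and $V=\nabla_X Z$. Your closing observation that the computation amounts to $\check{\nabla}\hat h=0$ plus non-degeneracy of $\hat h$ is also correct and gives a cleaner conceptual packaging of the same argument, though the paper does not phrase it that way.
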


\begin{proof}
Let us denote ${\check {\nabla}}^*_{X+\alpha}Z+\gamma=:V+\eta$.
From the definition of the generalized dual quasi-statistical connection and using the definition of $\check{\nabla}$, we get:
$$\beta(V)\pm \eta(Y)=X(\beta(Z))\pm X(\gamma(Y))-(\nabla_X\beta)Z\mp \gamma(\nabla_XY),$$
for any $X,Y,Z \in C^{\infty}(TM)$ and $\beta, \gamma \in C^{\infty}(T^*M)$.

Taking $\beta:=0$, we obtain:
$$\pm \eta(Y)=\pm X(\gamma(Y))\mp \gamma(\nabla_XY):=\pm (\nabla_X\gamma)Y$$
and taking $Y:=0$, we obtain:
$$\beta(V)=X(\beta(Z))-(\nabla_X\beta)Z:=\beta(\nabla_XZ).$$
Therefore the proof is complete.
\end{proof}

\begin{proposition}
If $\nabla$ is a torsion-free affine connection and $h$ is a $\nabla$-parallel $(0,2)$-tensor field on $M$, then $(\check{h},\check{\nabla})$ is a generalized quasi-statistical structure, where $\check{h}$ is given by (\ref{m3}) and $\check{\nabla}$ is given by (\ref{m4}).
\end{proposition}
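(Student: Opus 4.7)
The plan is to reduce the claim to two earlier results rather than perform the full tensorial computation. First, I would observe that under the hypotheses $\nabla h=0$ and $T^{\nabla}=0$ the three terms defining the quasi-statistical obstruction
$$
(d^{\nabla}h)(X,Y,Z)=(\nabla_Xh)(Y,Z)-(\nabla_Yh)(X,Z)+h(T^{\nabla}(X,Y),Z)
$$
vanish individually, so $(M,h,\nabla)$ is automatically a quasi-statistical manifold.

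Next, I would invoke the earlier Theorem on $(\check h,\hat\nabla)$, which asserts that $(TM\oplus T^*M,\check h,\hat\nabla)$ is a generalized quasi-statistical manifold precisely when $(M,h,\nabla)$ is a quasi-statistical manifold. Applied to the situation just obtained, this yields $d^{\hat\nabla}\check h=0$. Finally, Remark \ref{r} records that $\nabla$-parallelism of $h$ forces $\hat\nabla=\check\nabla$, and inserting this identification gives $d^{\check\nabla}\check h=0$, which is the desired conclusion.

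There is essentially no obstacle here; the argument is purely a bookkeeping chain through the already-established machinery. The only subtlety worth flagging is that the earlier Theorem on $(\check h,\hat\nabla)$ was stated for $h$ symmetric or skew-symmetric, whereas the present proposition does not explicitly impose that hypothesis. If one wishes to avoid that restriction, the alternative is a direct check that $\check{\nabla}\check{h}=0$: using $\check\nabla_{X+\alpha}(Y+\beta)=\nabla_XY+\nabla_X\beta$ and expanding $\check h(Y+\beta,Z+\gamma)=h(Y,Z)+h(h^{-1}(\beta),h^{-1}(\gamma))$, the first summand is $\check\nabla$-parallel because $\nabla h=0$, while the second is $\check\nabla$-parallel because $\nabla$ commutes with $h^{-1}$ (again by $\nabla$-parallelism of $h$), so each term on the right of the defining expression of $(\check\nabla_{X+\alpha}\check h)$ cancels. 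Combined with $T^{\check\nabla}=T^{\nabla}=0$ from the proof of Proposition \ref{p1}, this gives $d^{\check\nabla}\check h=0$ directly, without requiring any symmetry assumption on $h$.
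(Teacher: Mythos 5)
Your argument is correct, and your primary route differs from the paper's. The paper proves this proposition by a full direct expansion of $(d^{\check\nabla}\check h)(X+\alpha,Y+\beta,Z+\gamma)$, arriving at $(d^{\nabla}h)(X,Y,Z)$ plus four residual terms of the form $\beta(\nabla_Xh^{-1}(\gamma))-\beta(h^{-1}(\nabla_X\gamma))$, and then kills the residue by proving the identity $h^{-1}(\nabla_X\gamma)=\nabla_Xh^{-1}(\gamma)$ from $\nabla h=0$. Your first argument instead chains together the earlier theorem on $(\check h,\hat\nabla)$ with Remark \ref{r}, which is shorter and makes transparent that the whole content of the proposition is already contained in results proved before; the point that $\hat\nabla=\check\nabla$ forces $d^{\hat\nabla}\check h=d^{\check\nabla}\check h$ (same connection, same bracket $[\cdot,\cdot]_\nabla$, hence same torsion) is valid. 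You are also right to flag the one genuine gap in that reduction: the theorem on $(\check h,\hat\nabla)$ is stated only for $h$ symmetric or skew-symmetric (its proof carries a sign $\pm$ depending on which), while the present proposition imposes no such hypothesis, so the reduction as stated proves slightly less than claimed. Your fallback direct computation --- $\check\nabla$-parallelism of $\check h$ from $\nabla h=0$ together with $h^{-1}\circ\nabla_X=\nabla_X\circ h^{-1}$, plus $T^{\check\nabla}=T^{\nabla}=0$ --- closes that gap and is essentially a streamlined reorganization of the paper's own computation, with the same key identity doing the work. Either version is acceptable; the combination of the two is, if anything, more careful about hypotheses than the paper itself.
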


\begin{proof}
We have:
$$(d^{\check{\nabla}}\check{h})(X+\alpha, Y+\beta,Z+\gamma):=(\check{\nabla}_{X+\alpha}\check{h})(Y+\beta,Z+\gamma)-(\check{\nabla}_{Y+\beta}\check{h})(X+\alpha,Z+\gamma)+$$
$$+\check{h}(T^{\check{\nabla}}(X+\alpha, Y+\beta),Z+\gamma):=$$
$$:=X(\check{h}(Y+\beta,Z+\gamma))-\check{h}(\check{\nabla}_{X+\alpha}Y+\beta,Z+\gamma)-\check{h}(Y+\beta,\check{\nabla}_{X+\alpha},Z+\gamma)-$$
$$-Y(\check{h}(X+\alpha,Z+\gamma))+\check{h}(\check{\nabla}_{Y+\beta}X+\alpha,Z+\gamma)+\check{h}(X+\alpha,\check{\nabla}_{Y+\beta},Z+\gamma)+$$
$$+\check{h}(T^{\check{\nabla}}(X+\alpha, Y+\beta),Z+\gamma):=$$
$$:=X(h(Y,Z)+h(h^{-1}(\beta),h^{-1}(\gamma)))-$$
$$-h(\nabla_XY,Z)-h(h^{-1}(\nabla_X\beta),h^{-1}(\gamma))-
h(Y,\nabla_XZ)-h(h^{-1}(\beta),h^{-1}(\nabla_X\gamma))-$$
$$-Y(h(X,Z)+h(h^{-1}(\alpha),h^{-1}(\gamma)))+$$
$$+h(\nabla_YX,Z)+h(h^{-1}(\nabla_Y\alpha),h^{-1}(\gamma))+
h(X,\nabla_YZ)+h(h^{-1}(\alpha),h^{-1}(\nabla_Y\gamma))+h(T^{\nabla}(X,Y),Z):=$$
$$:=(\nabla_Xh)(Y,Z)-(\nabla_Yh)(X,Z)+h(T^{\nabla}(X,Y),Z)+$$
$$+X(\beta(h^{-1}(\gamma)))-(\nabla_X\beta)h^{-1}(\gamma)-\beta(h^{-1}(\nabla_X\gamma))-$$
$$-Y(\alpha(h^{-1}(\gamma)))+(\nabla_Y\alpha)h^{-1}(\gamma)+\alpha(h^{-1}(\nabla_Y\gamma))=$$
$$=(\nabla_Xh)(Y,Z)-(\nabla_Yh)(X,Z)+h(T^{\nabla}(X,Y),Z)+$$
$$+\beta(\nabla_Xh^{-1}(\gamma))-\beta(h^{-1}(\nabla_X\gamma))-\alpha(\nabla_Yh^{-1}(\gamma))+\alpha(h^{-1}(\nabla_Y\gamma)).$$

Also, for any $V\in C^{\infty}(TM)$, we have:
$$h(h^{-1}(\nabla_X\gamma)-\nabla_Xh^{-1}(\gamma),V)=h(h^{-1}(\nabla_X\gamma),V)-h(\nabla_Xh^{-1}(\gamma),V)=$$
$$=(\nabla_X\gamma)V-h(\nabla_Xh^{-1}(\gamma),V):=X(\gamma(V))-\gamma(\nabla_XV)-h(\nabla_Xh^{-1}(\gamma),V)=$$
$$=X(h(h^{-1}(\gamma),V))-h(h^{-1}(\gamma),\nabla_XV)-h(\nabla_Xh^{-1}(\gamma),V):=(\nabla_Xh)(h^{-1}(\gamma),V)=0,$$
hence $h^{-1}(\nabla_X\gamma)-\nabla_Xh^{-1}(\gamma)=0$, for any $X\in C^{\infty}(TM)$ and $\gamma\in C^{\infty}(T^*M)$. Therefore, $d^{\check{\nabla}}\check{h}=0$ and the proof is complete.
\end{proof}

\begin{proposition}\label{p5}
Let $(M,h,\nabla)$ be a quasi-statistical manifold with $\nabla$ a torsion-free affine connection, $h$ a $\nabla$-parallel $(0,2)$-tensor field on $M$ and let $(\check{h},\check{\nabla})$ be the generalized quasi-statistical structure on $TM\oplus T^*M$, with $\check{h}$ given by (\ref{m3}) and $\check{\nabla}$ given by (\ref{m4}). Then $\check{\nabla}$ and its generalized dual quasi-statistical connection, ${{\check {\nabla}}^*}_{\check h}$, coincide.
\end{proposition}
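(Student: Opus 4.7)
The plan is to evaluate the defining identity of the dual connection and decouple the $TM$- and $T^*M$-components, using two consequences of the $\nabla$-parallelism of $h$: first, that $(\nabla_X h)(Y,Z) = 0$, and second (the identity derived at the very end of the proof preceding this proposition) that the musical isomorphism commutes with $\nabla_X$, namely $\nabla_X h^{-1}(\gamma) = h^{-1}(\nabla_X \gamma)$.

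Concretely, I would set $({{\check{\nabla}}^*}_{\check h})_{X+\alpha}(Z+\gamma) =: V + \eta$ and plug into the defining relation
$$\check h(Y+\beta,\, V+\eta) \;=\; X\bigl(\check h(Y+\beta,\, Z+\gamma)\bigr) \;-\; \check h\bigl(\check \nabla_{X+\alpha}(Y+\beta),\, Z+\gamma\bigr).$$
Using $\check h(Y+\beta, Z+\gamma) = h(Y,Z) + \beta(h^{-1}(\gamma))$ together with $\check \nabla_{X+\alpha}(Y+\beta) = \nabla_X Y + \nabla_X \beta$, the right-hand side expands to $X(h(Y,Z)) + X(\beta(h^{-1}(\gamma))) - h(\nabla_X Y, Z) - (\nabla_X \beta)(h^{-1}(\gamma))$. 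Expanding the two $X(\cdot)$ terms by Leibniz and invoking the two parallelism identities above collapses this expression to $h(Y, \nabla_X Z) + \beta(h^{-1}(\nabla_X \gamma))$.

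Finally, I would isolate the two components by the standard trick: setting $\beta := 0$ gives $h(Y,V) = h(Y, \nabla_X Z)$ for all $Y \in C^{\infty}(TM)$, hence $V = \nabla_X Z$ by non-degeneracy of $h$; and setting $Y := 0$ gives $\beta(h^{-1}(\eta)) = \beta(h^{-1}(\nabla_X \gamma))$ for all $\beta \in C^{\infty}(T^*M)$, hence $\eta = \nabla_X \gamma$. Both components coincide with $\check \nabla_{X+\alpha}(Z+\gamma) = \nabla_X Z + \nabla_X \gamma$, proving ${\check \nabla}^*_{\check h} = \check \nabla$. The only non-trivial input is the commutation of $\nabla_X$ with the musical isomorphism; without the $\nabla$-parallelism of $h$, a residual $(\nabla_X h)$-term would obstruct the equality, in line with Remark \ref{r}.
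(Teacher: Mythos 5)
Your proposal is correct and follows essentially the same route as the paper: expand the defining identity of the dual connection, use $\nabla h=0$ to simplify, and decouple the two components by setting $\beta:=0$ and $Y:=0$. The only cosmetic difference is that you invoke $\nabla_X h^{-1}(\gamma)=h^{-1}(\nabla_X\gamma)$ directly to read off $\eta=\nabla_X\gamma$, whereas the paper stops at $\eta=h(\nabla_X h^{-1}(\gamma))$ (i.e.\ identifies the dual with $\hat\nabla$) and then cites Remark \ref{r} to conclude $\hat\nabla=\check\nabla$ --- the same fact in a different guise.
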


\begin{proof}
We get:
$$\check{h}(Y+\beta,({{{\check{\nabla}}^*}}_{\check h})_{X+\alpha}Z+\gamma)=X(h(Y,Z))+X(h(h^{-1}(\beta),h^{-1}(\gamma)))-$$$$-h(\nabla_XY,Z)-h(h^{-1}(\nabla_X\beta), h^{-1}(\gamma))
=$$
$$=h(Y,\nabla_XZ)+\beta(\nabla_Xh^{-1}(\gamma)),$$
for any $X,Y,Z \in C^{\infty}(TM)$ and $\alpha, \beta, \gamma \in C^{\infty}(T^*M)$.

Let us denote $({{{\check{\nabla}}^*}}_{\check h})_{X+\alpha}Z+\gamma=:V+\eta$. Then we have:
$$h(Y,V)+h(h^{-1}(\beta),h^{-1}(\eta))=h(Y,\nabla_XZ)+\beta(\nabla_Xh^{-1}(\gamma)),$$
for any $X,Y,Z \in C^{\infty}(TM)$ and $\beta,\gamma \in C^{\infty}(T^*M)$.

Taking $Y:=0$, we obtain:
$$\beta(h^{-1}(\eta))=\beta(\nabla_Xh^{-1}(\gamma))$$
which is equivalent to:
$$\eta=h(\nabla_Xh^{-1}(\gamma))$$
and taking $\beta:=0$, we obtain:
$$h(Y,V)=h(Y,{\nabla}_X Z)$$
which is equivalent to:
$$V=\nabla_XZ.$$
From Remark \ref{r} we get ${{{\check{\nabla}}^*}}_{\check h}=\hat{\nabla}=\check{\nabla}$.
Therefore the proof is complete.
\end{proof}

\section{The pull-back tensors on $TM \oplus T^{*}M$ of horizontal lifts, Sasaki and Patterson-Walker metrics}

\subsection{Patterson-Walker and Sasaki metrics on $T^*M$}

Let $M$ be a smooth manifold and let $\nabla$ be an affine connection on $M$.

Let $\pi :T^*M \rightarrow M$ be the canonical projection and ${\pi}_*:T(T^*M) \rightarrow TM$ be the tangent map of $\pi$. If $a \in T^*M$ and $A \in T_a(T^*M)$, then ${\pi}_*(A) \in T_{\pi (a)}M$ and we denote by ${\chi}_a$ the standard identification between $T^*_{\pi (a)}M$ and its tangent space $ T_a(T^*_{\pi (a)}M)$.

Let ${\Phi}^{\nabla} :TM \oplus T^{*}M \rightarrow T(T^*M)$ be the bundle morphism defined by \cite{an}:
\begin{equation} \label{m30} {\Phi}^{\nabla}(X+\alpha):=X^H_a+{\chi}_a(\alpha),\end{equation}
where $a \in T^*M$ and $X^H_a$ is the horizontal lift of $X\in T_{\pi (a)}M$.

Let $\left\{ x^{1},...,x^{n}\right\} $ be local coordinates on $M$, let $\left\{ {\tilde{x}}^{1},..., {\tilde{x}}^{n},y_1,...,y_n\right\} $ be respectively the corresponding local coordinates on $T^*M$ and let $\{X_1,...,X_n, \dfrac{\partial }{\partial
{y_{1}}},.., \dfrac{\partial }{\partial
{y_{n}}}\}$ be a local frame on $T(T^*M)$, where $X_i=\dfrac{\partial }{\partial
{\tilde{x}}^{i}}$. The horizontal lift of $\dfrac{\partial }{\partial
{{x}}^{i}}$ is defined by:
$$(\dfrac{\partial }{\partial
{{x}}^{i}})^H:=X_i+y_k{\Gamma}^k_{il}{\dfrac{\partial }{\partial
y_{l}}}$$
and we will denote $X_i^H=:(\dfrac{\partial }{\partial
{{x}}^{i}})^H$. Moreover, the vertical lift of $\dfrac{\partial }{\partial
{{x}}^{i}}$ is defined by:
$$(\dfrac{\partial }{\partial
{{x}}^{i}})^V:=\dfrac{\partial }{\partial
{{y}}_{i}},$$
where $i,k,l$ run from $1$ to $n$ and $\Gamma^k_{il}$ are the Christoffel's symbols of $\nabla$.

Let ${\Phi}^{\nabla} :TM \oplus T^{*}M \rightarrow T(T^*M)$ be the bundle morphism defined before (which is an isomorphism on the fibres). In local coordinates, we have the following expressions:
$${\Phi}^{\nabla}\left(\dfrac{\partial }{\partial
{x^{i}}}\right)=X_i^H$$
$${\Phi}^{\nabla}\left({dx^j}\right)=\dfrac{\partial }{\partial
{y_{j}}}.$$

In \cite{pw}, starting from a torsion-free affine connection on $M$, the Patterson-Walker metric, $\tilde h$, on $T^*M$ is defined as in the following:
$$\tilde h(X^H,Y^H)=0$$
$$\tilde h(X^V,Y^V)=0$$
$$\tilde h(Y^V,X^H)=\tilde h(X^H,Y^V)=(({\Phi}^{\nabla})^{-1}(Y^V))(X),$$
where $X,Y \in C^{\infty}(T^*M)$, $X^H, Y^H$ are the horizontal lifts and $X^V,Y^V$ are the vertical lifts of $X,Y$ respectively.

The definition can also be given if $\nabla$ has torsion and we define ${\tilde h}_\pm$ on $T^*M$ as in the following:
$${\tilde h}_\pm (X^H,Y^H)=0$$
$${\tilde h}_\pm (X^V,Y^V)=0$$
$${\tilde h}_\pm (Y^V,X^H)=(({\Phi}^{\nabla})^{-1}(Y^V))(X)$$
$${\tilde h}_\pm (X^H,Y^V)=\pm (({\Phi}^{\nabla})^{-1}(Y^V))(X),$$
where $X,Y \in C^{\infty}(T^*M)$, $X^H, Y^H$ are the horizontal lifts and $X^V,Y^V$ are the vertical lifts of $X,Y$ respectively.

We denote by ${\tilde {\tilde h}}_\pm$ the pull-back tensors of ${\tilde h}_\pm$ on $TM \oplus T^*M$:
$${\tilde {\tilde h}}_\pm (\sigma,\tau):=({{\Phi}^{\nabla}})^*({\tilde h}_\pm)(\sigma,\tau):={\tilde  h}_\pm ({\Phi}^{\nabla}(\sigma),{\Phi}^{\nabla}(\tau)),$$
for any $\sigma, \tau \in C^{\infty}(TM \oplus T^*M)$. Remark that
${\tilde {\tilde h}}_\pm$ are related to the indefinite metric or to the symplectic structure of $TM \oplus T^*M$ as follows.

\begin{proposition} $${\tilde {\tilde h}}_+=-2<\cdot, \cdot >$$
$${\tilde {\tilde h}}_-=-2(\cdot , \cdot ).$$
\end{proposition}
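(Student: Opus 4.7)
The plan is to compute the pull-back directly from the definitions. Since ${\Phi}^{\nabla}(X+\alpha) = X^H_a + \chi_a(\alpha)$, with $\chi_a(\alpha)$ playing the role of the vertical lift in the defining formulas of ${\tilde h}_\pm$, bilinearity splits ${\tilde {\tilde h}}_\pm(X+\alpha, Y+\beta)$ into four terms:
$${\tilde h}_\pm(X^H, Y^H) + {\tilde h}_\pm(X^H, \chi_a(\beta)) + {\tilde h}_\pm(\chi_a(\alpha), Y^H) + {\tilde h}_\pm(\chi_a(\alpha), \chi_a(\beta)).$$

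Next I would apply the defining relations of ${\tilde h}_\pm$: the horizontal-horizontal and vertical-vertical terms vanish. Since ${\Phi}^{\nabla}$ sends the cotangent summand of $TM \oplus T^*M$ onto the vertical subspace by $\alpha \mapsto \chi_a(\alpha)$, we have $({\Phi}^{\nabla})^{-1}(\chi_a(\alpha)) = \alpha$. Hence the two surviving mixed terms yield ${\tilde h}_\pm(X^H, \chi_a(\beta)) = \pm\beta(X)$ and ${\tilde h}_\pm(\chi_a(\alpha), Y^H) = \alpha(Y)$, so
$${\tilde {\tilde h}}_\pm(X+\alpha, Y+\beta) = \alpha(Y) \pm \beta(X).$$

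Finally I would compare with (\ref{m1}) and (\ref{m2}): the $+$ case gives $\alpha(Y) + \beta(X) = -2<X+\alpha, Y+\beta>$, while the $-$ case gives $\alpha(Y) - \beta(X) = -2(X+\alpha, Y+\beta)$, establishing both identities simultaneously.

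I do not expect any real obstacle, as the computation is essentially a two-line bilinear expansion. The only delicate point is keeping the roles of $X, Y$ and $\alpha, \beta$ straight when reading off the asymmetric defining identities of ${\tilde h}_\pm$, so that the sign convention for the $(X^H, Y^V)$ versus $(Y^V, X^H)$ entries is correctly applied to produce $\alpha(Y) \pm \beta(X)$ rather than a swapped combination.
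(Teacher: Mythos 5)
Your computation is correct and follows exactly the same route as the paper: expand by bilinearity, note that the horizontal--horizontal and vertical--vertical terms vanish, evaluate the two mixed terms using $({\Phi}^{\nabla})^{-1}(\chi_a(\alpha))=\alpha$ to get $\alpha(Y)\pm\beta(X)$, and compare with (\ref{m1}) and (\ref{m2}). The sign bookkeeping you flag as the delicate point is handled correctly.
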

\begin{proof} Let $\sigma=X+\alpha$, $\tau=Y+\beta$, $X,Y \in C^{\infty}(TM)$, $\alpha, \beta \in C^{\infty}(T^*M)$. Then:
$${\tilde {\tilde h}}_\pm(\sigma,\tau)={\tilde h}_\pm(X^H+{\Phi}^{\nabla}(\alpha),Y^H+{\Phi}^{\nabla}(\beta))=$$
$$={\tilde h}_\pm({\Phi}^{\nabla}(\alpha),Y^H)+{\tilde h}_\pm(X^H,{\Phi}^{\nabla}(\beta))=$$
$$=\alpha (Y) \pm \beta (X).$$
Then we get the statement.
\end{proof}

Let $h$ be a non-degenerate $(0,2)$-tensor field on $M$.
The Sasaki $(0,2)$-tensor field $h^{S^*}$ on $T^*M$, with respect to $\nabla$, is naturally defined by:
$$h^{S^*}(X^H,Y^H)=h(X,Y)$$
$$h^{S^*}(\alpha^V,\beta^V)=h(h^{-1}(\alpha),h^{-1}(\beta))$$
$$h^{S^*}(\alpha^V,Y^H)=0,$$
where $X,Y \in C^{\infty}(TM)$, $\alpha,\beta\in C^{\infty}(T^*M)$, $X^H, Y^H$ are the horizontal lifts of $X,Y$ and $\alpha^V,\beta^V$ are the vertical lifts of $\alpha,\beta$ respectively.

We denote by $\tilde{h}^{S^*}$ the pull-back tensor of $h^{S^*}$ on $TM \oplus T^*M$:
$$\tilde{h}^{S^*} (\sigma,\tau):=({{\Phi}^{\nabla}})^*(h^{S^*})(\sigma,\tau):= h^{S^*} ({\Phi}^{\nabla}(\sigma),{\Phi}^{\nabla}(\tau)),$$
for any $\sigma, \tau \in C^{\infty}(TM \oplus T^*M).$

\begin{proposition}
If $h$ is a non-degenerate $(0,2)$-tensor field on $M$, then:
$$\tilde{h}^{S^*} (X+\alpha,Y+\beta)=\hat h (X+\alpha,Y+\beta)=h(X,Y)+h(h^{-1}(\alpha),h^{-1}(\beta)),$$
for any $X,Y \in C^{\infty}(TM)$ and $\alpha, \beta \in C^{\infty}(T^*M).$
\end{proposition}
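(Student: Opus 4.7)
The plan is to unwind the definition of the pull-back tensor and then apply bilinearity together with the three defining relations of $h^{S^*}$. Since everything reduces to evaluating $h^{S^*}$ on the image of $\Phi^{\nabla}$, the proof should be quite short.

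First I would fix $\sigma=X+\alpha$ and $\tau=Y+\beta$, and write out
$$\tilde{h}^{S^*}(\sigma,\tau)=h^{S^*}(\Phi^{\nabla}(\sigma),\Phi^{\nabla}(\tau)).$$
By the formula (\ref{m30}) defining $\Phi^{\nabla}$, we have $\Phi^{\nabla}(X+\alpha)=X^{H}+\chi_{a}(\alpha)$ where, under the canonical identification $\chi_{a}$ of $T^{*}_{\pi(a)}M$ with $T_{a}(T^{*}_{\pi(a)}M)$, the term $\chi_{a}(\alpha)$ is exactly the vertical lift $\alpha^{V}$ (this is how vertical lifts of $1$-forms on $M$ are defined on $T^{*}M$, as one checks against the local coordinate expression $(\partial/\partial x^{i})^{V}=\partial/\partial y_{i}$ used earlier). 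Hence $\Phi^{\nabla}(X+\alpha)=X^{H}+\alpha^{V}$ and similarly for $Y+\beta$.

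Next I would expand by bilinearity of $h^{S^*}$:
$$h^{S^*}(X^{H}+\alpha^{V},Y^{H}+\beta^{V})=h^{S^*}(X^{H},Y^{H})+h^{S^*}(X^{H},\beta^{V})+h^{S^*}(\alpha^{V},Y^{H})+h^{S^*}(\alpha^{V},\beta^{V}).$$
Now I would plug in the three defining identities of the Sasaki tensor $h^{S^*}$: the two mixed terms vanish, the purely horizontal term equals $h(X,Y)$, and the purely vertical term equals $h(h^{-1}(\alpha),h^{-1}(\beta))$. Adding these gives the claimed expression, which coincides with $\check{h}(X+\alpha,Y+\beta)$ as defined in (\ref{m3}).

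There is essentially no obstacle; the only place that requires a moment of care is the identification $\chi_{a}(\alpha)=\alpha^{V}$, after which the computation is a four-term bilinear expansion in which two of the terms are zero by definition. I would not symmetrize or worry about whether $h$ is symmetric or skew, since the formula (\ref{m3}) for $\check{h}$ and the defining formulas for $h^{S^*}$ both make sense in either case and the proof does not use symmetry of $h$.
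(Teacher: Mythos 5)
Your proof is correct and follows essentially the same route as the paper: expand $h^{S^*}(\Phi^{\nabla}(\sigma),\Phi^{\nabla}(\tau))$ by bilinearity, kill the mixed terms, and read off the horizontal and vertical contributions. The only (harmless) difference is that you obtain the vertical--vertical term directly from the defining relation $h^{S^*}(\alpha^V,\beta^V)=h(h^{-1}(\alpha),h^{-1}(\beta))$ after identifying $\Phi^{\nabla}(\alpha)=\chi_a(\alpha)=\alpha^V$, whereas the paper verifies the same fact by a short local-coordinate computation with $\alpha=\alpha_k\,dx^k$.
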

\begin{proof} Let $\sigma=X+\alpha$, $\tau=Y+\beta$, $X,Y \in C^{\infty}(TM)$, $\alpha, \beta \in C^{\infty}(T^*M)$. Then:
$$\tilde{h}^{S^*}(\sigma,\tau)=h^{S^*}(X^H+{\Phi}^{\nabla}(\alpha),Y^H+{\Phi}^{\nabla}(\beta))=$$
$$=h^{S^*}(X^H,Y^H)+h^{S^*}({\Phi}^{\nabla}(\alpha),{\Phi}^{\nabla}(\beta))=$$
$$=h(X,Y)+h^{S^*}({\Phi}^{\nabla}(\alpha),{\Phi}^{\nabla}(\beta)).$$

In local coordinates, let $\alpha={\alpha}_k dx^k$, $\beta={\beta}_ldx^l$ and we get:
$$h^{S^*}({\Phi}^{\nabla}(\alpha),{\Phi}^{\nabla}(\beta))=h^{S^*}({\alpha}_k \dfrac{\partial }{\partial
{y_{k}}},{\beta}_l\dfrac{\partial }{\partial
{y_{l}}})=$$
$$={\alpha}_k{\beta}_l h_{kl}=h(h^{-1}(\alpha),h^{-1}(\beta)).$$
Then we get the statement.
\end{proof}

\subsection{Horizontal lift and Sasaki metrics on $TM$}

Let $M$ be a smooth manifold, let $h$ be a non-degenerate $(0,2)$-tensor field on $M$, and let $\nabla$ be an affine connection on $M$. The horizontal lift $h^H$ of $h$ on $TM$ with respect to $\nabla$ is defined by:
$$h^H(X^H,Y^H)=0$$
$$h^H(X^V,Y^V)=0$$
$$h^H(X^H,Y^V)=h(X,Y),$$
where $X,Y \in C^{\infty}(TM)$, $X^H, Y^H$ are the horizontal lifts and $X^V,Y^V$ are the vertical lifts of $X,Y$ respectively.

Let $\pi :TM \rightarrow M$ be the canonical projection and ${\pi}_*:T(TM) \rightarrow TM$ be the tangent map of $\pi$. If $a \in TM$ and $A \in T_a(TM)$, then ${\pi}_*(A) \in T_{\pi (a)}M$ and we denote by ${\chi}_a$ the standard identification between $T_{\pi (a)}M$ and its tangent space $ T_a(T_{\pi (a)}M)$.

Let ${\Psi}^{\nabla} :TM \oplus T^{*}M \rightarrow T(TM)$ be the bundle morphism defined by:
\begin{equation} \label{m40} {\Psi}^{\nabla}(X+\alpha):=X^H_a+{\chi}_a(h^{-1}(\alpha)),\end{equation}
where $a \in TM$ and $X^H_a$ is the horizontal lift of $X\in T_{\pi (a)}M$.

Let $\left\{ x^{1},...,x^{n}\right\} $ be local coordinates on $M$, let $\left\{ {\tilde{x}}^{1},..., {\tilde{x}}^{n},y^1,...,y^n\right\} $ be respectively the corresponding local coordinates on $TM$ and let $\{X_1,...,X_n, \dfrac{\partial }{\partial
{y^{1}}},.., \dfrac{\partial }{\partial
{y^{n}}}\}$ be a local frame on $T(TM)$, where $X_i=\dfrac{\partial }{\partial
{\tilde{x}}^{i}}$. The horizontal lift of $\dfrac{\partial }{\partial
{{x}}^{i}}$ is defined by:
$$(\dfrac{\partial }{\partial
{{x}}^{i}})^H:=X_i-y^k{\Gamma}^l_{ik}{\dfrac{\partial }{\partial
y^{l}}}$$
and we will denote $X_i^H=:(\dfrac{\partial }{\partial
{{x}}^{i}})^H$. Moreover, the vertical lift of $\dfrac{\partial }{\partial
{{x}}^{i}}$ is defined by:
$$(\dfrac{\partial }{\partial
{{x}}^{i}})^V:=\dfrac{\partial }{\partial
{{y}}^{i}},$$
where $i,k,l$ run from $1$ to $n$ and $\Gamma^k_{il}$ are the Christoffel's symbols of $\nabla$.

Let ${\Psi}^{\nabla} :TM \oplus T^{*}M \rightarrow T(TM)$ be the bundle morphism defined before (which is an isomorphism on the fibres). In local coordinates, we have the following expressions:
$${\Psi}^{\nabla}\left(\dfrac{\partial }{\partial
{x^{i}}}\right)=X_i^H$$
$${\Psi}^{\nabla}\left({dx^j}\right)=h^{jk} \dfrac{\partial}{\partial y^k}.$$

We denote by $\bar h$ the pull-back tensor of $h^H$ on $TM \oplus T^*M$:
$$\bar h (\sigma,\tau):=({{\Psi}^{\nabla}})^*(h^H)(\sigma,\tau):= h^H ({\Psi}^{\nabla}(\sigma),{\Psi}^{\nabla}(\tau)),$$
for any $\sigma, \tau \in C^{\infty}(TM \oplus T^*M)$. Remark that
$\bar h$ is related to the indefinite metric or to the symplectic structure of $TM \oplus T^*M$ as follows.

\begin{proposition}
If $h$ is a symmetric tensor, then:
$$\bar h=-2<\cdot , \cdot >.$$
If $h$ is a skew-symmetric tensor, then:
$$\bar h=-2(\cdot , \cdot ).$$
\end{proposition}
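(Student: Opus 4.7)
The plan is to run the computation in direct analogy with the preceding Patterson-Walker proposition. First I would set $\sigma = X+\alpha$ and $\tau = Y+\beta$ and use (\ref{m40}) to write $\Psi^{\nabla}(X+\alpha) = X^H + (h^{-1}(\alpha))^V$ under the identification $\chi_a$, so that $\bar h(\sigma,\tau) = h^H\bigl(X^H + (h^{-1}(\alpha))^V,\, Y^H + (h^{-1}(\beta))^V\bigr)$ expands by bilinearity into four pieces.

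Next, two of those pieces vanish immediately from the defining relations of $h^H$, namely $h^H(X^H,Y^H)=0$ and $h^H((h^{-1}(\alpha))^V,(h^{-1}(\beta))^V)=0$. The mixed term $h^H(X^H,(h^{-1}(\beta))^V)$ is handled directly by the third defining equation, giving $h(X,h^{-1}(\beta))$. The fourth term, $h^H((h^{-1}(\alpha))^V, Y^H)$, is not literally covered by the three explicit formulas, so I would extend the definition by the natural convention that $h^H$ inherits the (skew-)symmetry of $h$; a brief check shows that in both cases this convention forces $h^H(X^V,Y^H) = h(X,Y)$, so $h^H((h^{-1}(\alpha))^V, Y^H) = h(h^{-1}(\alpha),Y) = \alpha(Y)$. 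I would then collapse the remaining term via the musical identity together with the symmetry or skew-symmetry of $h$: $h(X,h^{-1}(\beta))$ equals $\beta(X)$ when $h$ is symmetric and $-\beta(X)$ when $h$ is skew-symmetric.

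Summing the surviving contributions gives $\bar h(\sigma,\tau) = \alpha(Y)+\beta(X)$ in the symmetric case, which matches $-2\langle\sigma,\tau\rangle$ via (\ref{m1}), and $\bar h(\sigma,\tau) = \alpha(Y)-\beta(X)$ in the skew case, which matches $-2(\sigma,\tau)$ via (\ref{m2}). The only subtle step is the extension $h^H(X^V,Y^H)$, since the excerpt specifies only three of the four argument combinations; it is precisely the sign produced by the (skew-)symmetry convention, combined with the sign flip in $h(X,h^{-1}(\beta))=\pm\beta(X)$, that channels the symmetric case into the indefinite metric and the skew case into the symplectic form, exactly parallel to how $\tilde{\tilde h}_{\pm}$ split in the Patterson-Walker proposition.
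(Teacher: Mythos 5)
Your proposal is correct and follows essentially the same route as the paper: the paper expands $\bar h(\sigma,\tau)$ into the same four terms (in local coordinates, writing $\Psi^{\nabla}(\alpha)=\alpha_k h^{kr}\partial/\partial y^r$) and reduces the two surviving mixed terms to $\alpha(Y)\pm\beta(X)$ via $h^{kr}h_{rj}=\delta^k_j$ and $h^{ls}h_{is}=\pm\delta^l_i$. The only difference is that you make explicit the convention $h^H(X^V,Y^H)=h(X,Y)$ (forced by letting $h^H$ inherit the symmetry or skew-symmetry of $h$), which the paper uses implicitly when it evaluates $h^H(\partial/\partial y^r, X_j^H)=h_{rj}$.
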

\begin{proof}
Let $\sigma=X+\alpha$, $\tau=Y+\beta$, $X,Y \in C^{\infty}(TM)$, $\alpha, \beta \in C^{\infty}(T^*M)$. Then:
$$\bar h(\sigma,\tau)=h^H(X^H+{\Psi}^{\nabla}(\alpha),Y^H+{\Psi}^{\nabla}(\beta))=$$
$$=h^H({\Psi}^{\nabla}(\alpha),Y^H)+h^H(X^H,{\Psi}^{\nabla}(\beta)).$$

In local coordinates, let $X=X^i \dfrac{\partial }{\partial
{x^{i}}}$, $Y=Y^j \dfrac{\partial }{\partial
{x^{j}}}$, $\alpha={\alpha}_k dx^k$, $\beta={\beta}_ldx^l$ and we get:
$$\bar h(\sigma,\tau)=h^H({\alpha}_k h^{kr} \dfrac{\partial }{\partial
{y^{r}}},Y^j X_j^H)+h^H(X^iX_i^H,{\beta}_lh^{ls} \dfrac{\partial }{\partial
{y^{s}}})=$$
$$={\alpha}_kY^jh^{kr}h_{rj}+X^i{\beta}_lh^{ls}h_{is}={\alpha}_kY^j {\delta}^k_j \pm X^i{\beta}_l {\delta}^l_i=$$
$$=\alpha (Y) \pm \beta (X),$$
where we denoted by $\delta$ the Kronecker's symbol and the sign $+$ is for $h$ symmetric, $-$ is for $h$ skew-symmetric. Then we get the statement.
\end{proof}

The Sasaki $(0,2)$-tensor field $h^S$ on $TM$, with respect to $\nabla$, is naturally defined by:
$$h^S(X^H,Y^H)=h(X,Y)$$
$$h^S(X^V,Y^V)=h(X,Y)$$
$$h^S(X^H,Y^V)=0,$$
where $X,Y \in C^{\infty}(TM)$, $X^H, Y^H$ are the horizontal lifts and $X^V,Y^V$ are the vertical lifts of $X,Y$ respectively.

We denote by $\bar{h}^S$ the pull-back tensor of $h^S$ on $TM \oplus T^*M$:
$$\bar{h}^S (\sigma,\tau):=({{\Psi}^{\nabla}})^*(h^S)(\sigma,\tau):= h^S ({\Psi}^{\nabla}(\sigma),{\Psi}^{\nabla}(\tau)),$$
for any $\sigma, \tau \in C^{\infty}(TM \oplus T^*M).$

\begin{proposition} If $h$ is a non-degenerate $(0,2)$-tensor field on $M$, then:
$$\bar{h}^S (X+\alpha,Y+\beta)=\check h(X+\alpha,Y+\beta)=h(X,Y)+h(h^{-1}(\alpha),h^{-1}(\beta)),$$
for any $X,Y \in C^{\infty}(TM)$ and $\alpha, \beta \in C^{\infty}(T^*M).$
\end{proposition}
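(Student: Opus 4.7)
The plan is to mirror the proof of the analogous statement for $\tilde h^{S^*}$, since the structure of $\Psi^\nabla$ parallels that of $\Phi^\nabla$. The main observation is that $\Psi^\nabla$ sends the covector component $\alpha$ to a purely vertical vector on $TM$ (namely $\chi_a(h^{-1}(\alpha))$), while it sends the vector component $X$ to the horizontal lift $X^H$. Combined with the defining properties of the Sasaki metric $h^S$ (where horizontal-vertical pairings vanish), this will immediately decouple the cross terms.

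More concretely, first I would expand
\[
\bar h^S(X+\alpha, Y+\beta) = h^S\bigl(X^H + \Psi^\nabla(\alpha),\, Y^H + \Psi^\nabla(\beta)\bigr)
\]
by bilinearity into four terms. The two mixed horizontal-vertical terms $h^S(X^H, \Psi^\nabla(\beta))$ and $h^S(\Psi^\nabla(\alpha), Y^H)$ vanish by the third defining relation $h^S(X^H,Y^V)=0$. The purely horizontal term contributes $h^S(X^H,Y^H)=h(X,Y)$.

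It remains to evaluate the purely vertical term $h^S(\Psi^\nabla(\alpha), \Psi^\nabla(\beta))$. Here I would work in local coordinates exactly as in the $\tilde h^{S^*}$ proposition: writing $\alpha = \alpha_k\,dx^k$ and $\beta = \beta_l\,dx^l$, the local expression $\Psi^\nabla(dx^j) = h^{jk}\,\partial/\partial y^k$ gives $\Psi^\nabla(\alpha) = \alpha_k h^{kr}\,\partial/\partial y^r$ and similarly for $\beta$. Applying the vertical-vertical rule $h^S(X^V,Y^V)=h(X,Y)$ yields $\alpha_k\beta_l h^{kr}h^{ls}h_{rs}$, which collapses to $\alpha_k\beta_l h^{kl} = h(h^{-1}(\alpha),h^{-1}(\beta))$.

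Summing the surviving terms recovers exactly $h(X,Y) + h(h^{-1}(\alpha),h^{-1}(\beta))$, which by definition equals $\check h(X+\alpha, Y+\beta)$. There is no serious obstacle: the only subtlety is keeping track of the musical conventions when simplifying $h^{kr}h^{ls}h_{rs}$, which reduces to a single contraction once one remembers that $h^{-1}\circ \flat_h = \mathrm{id}$.
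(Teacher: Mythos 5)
Your proposal is correct and follows essentially the same route as the paper: split $h^S(X^H+\Psi^\nabla(\alpha),\,Y^H+\Psi^\nabla(\beta))$ by bilinearity, kill the mixed terms via $h^S(X^H,Y^V)=0$, and evaluate the vertical-vertical term in local coordinates using $\Psi^\nabla(dx^j)=h^{jk}\,\partial/\partial y^k$ to get $\alpha_k\beta_l h^{kr}h^{ls}h_{rs}=h(h^{-1}(\alpha),h^{-1}(\beta))$. The only cosmetic difference is your extra collapse to $\alpha_k\beta_l h^{kl}$, which the paper does not bother with (and which requires a little index care when $h$ is not symmetric), but the identification with $h(h^{-1}(\alpha),h^{-1}(\beta))$ holds either way.
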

\begin{proof} Let $\sigma=X+\alpha$, $\tau=Y+\beta$, $X,Y \in C^{\infty}(TM)$, $\alpha, \beta \in C^{\infty}(T^*M)$. Then:
$$\bar{h}^S(\sigma,\tau)=h^S(X^H+{\Psi}^{\nabla}(\alpha),Y^H+{\Psi}^{\nabla}(\beta))=$$
$$=h^S(X^H,Y^H)+h^S({\Psi}^{\nabla}(\alpha),{\Psi}^{\nabla}(\beta))=$$
$$=h(X,Y)+h^S({\Psi}^{\nabla}(\alpha),{\Psi}^{\nabla}(\beta)).$$

In local coordinates, let $\alpha={\alpha}_k dx^k$, $\beta={\beta}_ldx^l$ and we get:
$$h^S({\Psi}^{\nabla}(\alpha),{\Psi}^{\nabla}(\beta))=h^S({\alpha}_k h^{kr} \dfrac{\partial }{\partial
{y^{r}}},{\beta}_lh^{ls} \dfrac{\partial }{\partial
{y^{s}}})=$$
$$={\alpha}_k{\beta}_l h^{kr}h^{ls}h_{rs}=h(h^{-1}(\alpha),h^{-1}(\beta)).$$
Then we get the statement.
\end{proof}

\subsection{Quasi-statistical structures on cotangent bundles}

Given an affine connection on $M$, the splitting in horizontal and vertical subbundles identifies $T(T^*M)$ with the pull-back bundle ${\pi}^*(TM \oplus T^*M)$, where $\pi :T^*M \rightarrow M$ is the canonical projection map. In particular, given a connection on $TM \oplus T^*M$, we can define the pull-back connection on ${\pi}^*(TM \oplus T^*M)$.

A direct computation gives the following:

\begin{proposition} The pull-back connection $\tilde \nabla$ of $\hat \nabla$ on $T^*M$ is defined, in local coordinates,  by:
$${\tilde \nabla}_{(\dfrac{\partial }{\partial
{{x}}^{i}})^H} (\dfrac{\partial }{\partial
{{x}}^{j}})^H={\Gamma}^k_{ij} (\dfrac{\partial }{\partial
{{x}}^{k}})^H$$
$${\tilde \nabla}_{(\dfrac{\partial }{\partial
{{x}}^{i}})^H} \dfrac{\partial }{\partial
{{y}}_{j}}=(\dfrac{\partial h^{jk}}{\partial
{{x}}^{i}}+h^{jl}{\Gamma}^k_{il})h_{rk} \dfrac{\partial }{\partial
{{y}}_{r}}$$
$${\tilde \nabla}_{\dfrac{\partial }{\partial
{{y}}_{j}}} (\dfrac{\partial }{\partial
{{x}}^{i}})^H=0$$
$${\tilde \nabla}_{\dfrac{\partial }{\partial
{{y}}_{i}}}\dfrac{\partial }{\partial
{{y}}_{j}}=0.$$
\end{proposition}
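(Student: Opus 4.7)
The plan is to use the standard characterization of a pull-back connection: under the fibre isomorphism $\Phi^{\nabla}\colon TM\oplus T^*M\to T(T^*M)$, the pull-back connection $\tilde\nabla$ is the unique affine connection on $T^*M$ satisfying
$$\tilde\nabla_{A}\,\Phi^{\nabla}(s)=\Phi^{\nabla}\bigl(\hat\nabla_{(\Phi^{\nabla})^{-1}(A)}\,s\bigr)$$
for every local frame field $A$ on $T(T^*M)$ and every section $s$ of $TM\oplus T^*M$. It therefore suffices to evaluate $\hat\nabla$ on the four pairs of canonical generators and transport the results via the explicit formulas $\Phi^{\nabla}(\partial/\partial x^i)=X_i^H$ and $\Phi^{\nabla}(dx^j)=\partial/\partial y_j$ recorded earlier.

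For the first identity, since the covector part of the first argument in (\ref{m5}) is zero, $\hat\nabla_{\partial/\partial x^i}(\partial/\partial x^j)$ reduces to $\nabla_{\partial/\partial x^i}\partial/\partial x^j=\Gamma^{k}_{ij}\,\partial/\partial x^k$, and $\Phi^{\nabla}$ sends this to $\Gamma^{k}_{ij}X_{k}^{H}$. For the last two identities, the definition (\ref{m5}) yields $\hat\nabla_{0+\alpha}(Y+\beta)=0$ whenever the vector part of the first argument vanishes; hence both $\hat\nabla_{dx^j}(\partial/\partial x^i)$ and $\hat\nabla_{dx^j}(dx^i)$ vanish, and so do their images under $\Phi^{\nabla}$, matching the third and fourth formulas in the statement.

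The only substantive computation is the second identity. I would expand $h^{-1}(dx^j)=h^{jk}\,\partial/\partial x^k$, apply $\nabla_{\partial/\partial x^i}$ through the Leibniz rule to obtain $\bigl(\partial h^{jk}/\partial x^i+h^{jl}\Gamma^{k}_{il}\bigr)\,\partial/\partial x^k$, then apply the musical isomorphism $h$, which sends $\partial/\partial x^k$ to $h_{rk}\,dx^r$, producing $\hat\nabla_{\partial/\partial x^i}(dx^j)=(\partial h^{jk}/\partial x^i+h^{jl}\Gamma^{k}_{il})\,h_{rk}\,dx^r$. Pushing forward by $\Phi^{\nabla}$ turns $dx^r$ into $\partial/\partial y_r$ and gives the stated expression. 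The main obstacle is nothing conceptual—only the careful bookkeeping of the index placement in $h^{jk}$, $h_{rk}$ and the Christoffel contraction—after which all four formulas drop out.
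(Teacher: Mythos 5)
Your proposal is correct and is essentially the paper's own argument: the paper gives no proof beyond the phrase ``a direct computation,'' and what you write out is exactly that computation, namely transporting the connection $\hat{\nabla}$ of (\ref{m5}) through the frame correspondence ${\Phi}^{\nabla}(\partial/\partial x^i)=X_i^H$, ${\Phi}^{\nabla}(dx^j)=\partial/\partial y_j$, with the only nontrivial term being $\hat{\nabla}_{\partial/\partial x^i}(dx^j)=h(\nabla_{\partial/\partial x^i}h^{-1}(dx^j))$. All four formulas drop out as you describe, so nothing further is needed.
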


In local coordinates, the torsion $T^{\tilde \nabla}$ of $\tilde \nabla$ is:
$$T^{\tilde \nabla}({(\dfrac{\partial }{\partial
{{x}}^{i}})^H},{(\dfrac{\partial }{\partial
{{x}}^{j}})^H})=({\Gamma}^k_{ij}-{\Gamma}^k_{ji}) (\dfrac{\partial }{\partial
{{x}}^{k}})^H-y_l R^l_{ijk}{\dfrac{\partial }{\partial
{{y}}_{k}}}  $$
$$T^{\tilde \nabla}({\dfrac{\partial }{\partial
{{y}}_{i}}},{(\dfrac{\partial }{\partial
{{x}}^{j}})^H})=-((\dfrac{\partial h^{ik}}{\partial
{{x}}^{j}}+h^{il}{\Gamma}^k_{jl})h_{rk}+{\Gamma}^i_{jk}){\dfrac{\partial }{\partial
{{y}}_{r}}}  $$
$$T^{\tilde \nabla}({\dfrac{\partial }{\partial
{{y}}_{i}}},{\dfrac{\partial }{\partial
{{y}}_{j}}})=0$$
and the curvature $R^{{\tilde \nabla}}$ of ${\tilde \nabla}$, which is the pull-back of $R^{{\hat \nabla}}$, is:
$$R^{{\tilde \nabla}}({\dfrac{\partial }{\partial
{{y}}_{i}}},{\dfrac{\partial }{\partial
{{y}}_{j}}})=0$$
$$R^{\tilde \nabla}(({\dfrac{\partial }{\partial
{{x}}^{i}}})^H,{\dfrac{\partial }{\partial
{{y}}_{j}}})=0$$
$$R^{\tilde \nabla}(({\dfrac{\partial }{\partial
{{x}}^{i}}})^H,({\dfrac{\partial }{\partial
{{x}}^{j}}})^H){\dfrac{\partial }{\partial
{{y}}_{k}}}=h^{kr}R^l_{ijr}h_{ls}{\dfrac{\partial }{\partial
{{y}}_{s}}}$$
$$R^{\tilde \nabla}(({\dfrac{\partial }{\partial
{{x}}^{i}}})^H,({\dfrac{\partial }{\partial
{{x}}^{j}}})^H)({\dfrac{\partial }{\partial
{{x}}^{k}}})^H=(R^{\nabla}({\dfrac{\partial }{\partial
{{x}}^{i}}},{\dfrac{\partial }{\partial
{{x}}^{j}}}){\dfrac{\partial }{\partial
{{x}}^{k}}})^H.$$

Therefore we get:

\begin{proposition}
$\nabla$ is flat if and only if ${ \tilde \nabla}$ is flat.
\end{proposition}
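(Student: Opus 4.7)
The plan is to read the statement as a direct corollary of the local coordinate expressions for $R^{\tilde{\nabla}}$ given immediately before the proposition, together with the earlier result that $R^{\hat{\nabla}}$ vanishes if and only if $R^{\nabla}$ vanishes. Since $\tilde{\nabla}$ was explicitly defined as the pull-back of $\hat{\nabla}$ along $\pi:T^*M\to M$, its curvature is the pull-back of $R^{\hat{\nabla}}$, so at each point $a\in T^*M$ the operator $R^{\tilde{\nabla}}_a$ is essentially $R^{\hat{\nabla}}_{\pi(a)}$ transported through the splitting $T_a(T^*M)\cong T_{\pi(a)}M\oplus T^*_{\pi(a)}M$. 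Once this is observed, both directions reduce to a local check.

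For the forward implication, assume $\nabla$ is flat. Then all Christoffel curvature components $R^{l}_{ijr}$ vanish, so the third local formula $R^{\tilde{\nabla}}((\partial_{x^i})^H,(\partial_{x^j})^H)\partial_{y_k}=h^{kr}R^{l}_{ijr}h_{ls}\,\partial_{y_s}$ gives zero, and the fourth formula $R^{\tilde{\nabla}}((\partial_{x^i})^H,(\partial_{x^j})^H)(\partial_{x^k})^H=(R^{\nabla}(\partial_{x^i},\partial_{x^j})\partial_{x^k})^H$ also gives zero. The remaining two local components are already identically zero, so $R^{\tilde{\nabla}}=0$.

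For the converse, assume $R^{\tilde{\nabla}}=0$. From the fourth formula, $(R^{\nabla}(\partial_{x^i},\partial_{x^j})\partial_{x^k})^H=0$ for all $i,j,k$. Since the horizontal lift $X\mapsto X^H$ is an injective $\mathbb{R}$-linear map from $T_{\pi(a)}M$ into $T_a(T^*M)$ (it is the restriction of the isomorphism $\Phi^{\nabla}$ to the $TM$-summand), this forces $R^{\nabla}(\partial_{x^i},\partial_{x^j})\partial_{x^k}=0$ at every point of $M$, hence $\nabla$ is flat.

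There is no real obstacle: the preceding computation of $R^{\tilde{\nabla}}$ in coordinates does all the work, and the only point to verify is the injectivity of the horizontal lift, which is immediate from its local expression $X_i^H=X_i+y_k\Gamma^{k}_{il}\partial_{y_l}$. Alternatively, one could bypass coordinates entirely by invoking the earlier proposition (curvature of $\hat{\nabla}$ in terms of $R^{\nabla}$) together with the fact that $R^{\tilde{\nabla}}$ is the pull-back of $R^{\hat{\nabla}}$ and that $\pi$ is a surjective submersion, so $R^{\tilde{\nabla}}=0$ on $T^*M$ is equivalent to $R^{\hat{\nabla}}=0$ on $M$, which in turn is equivalent to $R^{\nabla}=0$.
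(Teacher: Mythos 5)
Your argument is correct and is essentially the paper's own: the proposition is stated there as an immediate consequence of the displayed coordinate formulas for $R^{\tilde\nabla}$, whose only possibly nonzero components are $h^{kr}R^l_{ijr}h_{ls}\,\partial_{y_s}$ and $(R^{\nabla}(\partial_{x^i},\partial_{x^j})\partial_{x^k})^H$, and these vanish precisely when $R^{\nabla}=0$. Your added remarks on the injectivity of the horizontal lift and the pull-back interpretation of $R^{\tilde\nabla}$ only make explicit what the paper leaves tacit.
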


\begin{theorem}
Let $(M,h,\nabla)$ be a quasi-statistical manifold such that $\nabla$ is flat. Then $(T^*M, h^{S^*}, \tilde \nabla)$ is a flat quasi-statistical manifold.
\end{theorem}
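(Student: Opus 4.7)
My plan is to decompose the theorem into two independent claims. For the flatness of $\tilde\nabla$, I invoke the preceding proposition directly: since $\nabla$ is assumed flat, $\tilde\nabla$ is flat. The real content lies in verifying the quasi-statistical identity $d^{\tilde\nabla} h^{S^*} = 0$ on $T^*M$.

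My approach for this is to transport the already-established identity $d^{\hat\nabla} \check{h} = 0$ from $TM \oplus T^*M$ up to $T^*M$ via the bundle morphism $\Phi^\nabla$ of (\ref{m30}). This should work because $\tilde\nabla$ is by construction the pull-back of $\hat\nabla$, and the preceding subsection showed that $\tilde{h}^{S^*} = \check{h}$; combined with the theorem establishing $(TM \oplus T^*M, \check{h}, \hat\nabla)$ as a generalized quasi-statistical manifold, the required data is already in place. For a section $\sigma = X + \alpha$ of $TM \oplus T^*M$, I consider its lift $\Phi^\nabla(\sigma) = X^H + \alpha^V$ as a vector field on $T^*M$; such lifts span $T(T^*M)$ pointwise, so by tensoriality of $d^{\tilde\nabla} h^{S^*}$ it suffices to test on lifted triples.

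On lifted triples I expect the identifications $\tilde\nabla_{\Phi^\nabla(\sigma)} \Phi^\nabla(\tau) = \Phi^\nabla(\hat\nabla_\sigma \tau)$, the scalar $h^{S^*}(\Phi^\nabla(\tau), \Phi^\nabla(\nu))$ to equal the pull-back along $\pi$ of $\check{h}(\tau,\nu)$, and $\Phi^\nabla(\sigma)$ acting on this function to recover $\sigma(\check{h}(\tau,\nu))$. The delicate step, where the flatness of $\nabla$ enters crucially, is reconciling the two brackets: $d^{\tilde\nabla} h^{S^*}$ uses the ordinary Lie bracket on $T^*M$ while $d^{\hat\nabla} \check{h}$ uses $[\cdot,\cdot]_\nabla$ on $TM \oplus T^*M$. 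A direct coordinate computation shows $[X_i^H, X_j^H] = y_k R^k_{mij} \, \partial/\partial y_m$, which vanishes under $R^\nabla = 0$, while $[X_i^H, \partial/\partial y_j] = -\Gamma^j_{il}\, \partial/\partial y_l$ and $[\partial/\partial y_i, \partial/\partial y_j] = 0$ match the images under $\Phi^\nabla$ of the $[\cdot,\cdot]_\nabla$-brackets of $\partial/\partial x^i$ and $dx^j$ with no correction. Hence $T^{\tilde\nabla}(\Phi^\nabla(\sigma), \Phi^\nabla(\tau)) = \Phi^\nabla(T^{\hat\nabla}(\sigma, \tau))$, and assembling the three pieces yields $(d^{\tilde\nabla} h^{S^*})(\Phi^\nabla(\sigma), \Phi^\nabla(\tau), \Phi^\nabla(\nu)) = (d^{\hat\nabla} \check{h})(\sigma, \tau, \nu) = 0$. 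The principal technical hurdle is the bracket computation, and it is precisely there that the hypothesis $R^\nabla = 0$ is indispensable; without it the vertical curvature correction would obstruct the identification of the two bracket structures.
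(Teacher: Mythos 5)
Your argument is correct, and it takes a genuinely different route from the paper. The paper proves this theorem by a direct local-coordinate evaluation of the four components of $d^{\tilde\nabla}h^{S^*}$ on the frame $\{X_i^H,\partial/\partial y_j\}$, finding that the only potentially nonzero contributions are $(d^{\tilde\nabla}h^{S^*})(X_i^H,X_j^H)(\partial/\partial y_k)=-y_lR^l_{ijr}h^{kr}$ and $(d^{\tilde\nabla}h^{S^*})(X_i^H,X_j^H)(X_k^H)=(d^\nabla h)(\partial_i,\partial_j,\partial_k)$, both of which vanish under the hypotheses. You instead transport the already-established identity $d^{\hat\nabla}\check h=0$ through $\Phi^\nabla$, using that $\tilde\nabla$ is by construction the pull-back of $\hat\nabla$ (so $\tilde\nabla_{\Phi^\nabla\sigma}\Phi^\nabla\tau=\Phi^\nabla(\hat\nabla_\sigma\tau)$, consistent with the paper's local formulas, including $\tilde\nabla_{\partial/\partial y_j}=0$ on pulled-back sections), that $h^{S^*}$ pulls back to $\check h$, and that tensoriality lets you test on lifted frames. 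Your identification of the single point of friction --- the ordinary Lie bracket on $T^*M$ versus $[\cdot,\cdot]_\nabla$ on $TM\oplus T^*M$, which differ exactly by the vertical curvature term in $[X_i^H,X_j^H]$ --- is precisely the obstruction the paper's computation exhibits as $-y_lR^l_{ijr}h^{kr}$, so the two proofs locate the same use of flatness. What your version buys is conceptual economy: it makes clear that the theorem is the image under $\Phi^\nabla$ of the earlier generalized quasi-statistical theorem for $(\check h,\hat\nabla)$, modulo the curvature correction. What the paper's version buys is self-containedness and an explicit record of all four components, which makes the analogous computations for $\tilde h_\pm$ and for the tangent-bundle case immediate by the same bookkeeping. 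One small presentational point: you should state explicitly that $\hat\nabla_{X+\alpha}$ depends only on $X$ and that the vertical part of $\Phi^\nabla(\sigma)$ annihilates functions pulled back from $M$, since both facts are silently used when you convert $\Phi^\nabla(\sigma)\bigl(h^{S^*}(\Phi^\nabla\tau,\Phi^\nabla\nu)\bigr)$ into $\sigma(\check h(\tau,\nu))$; with that said, the argument is complete.
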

\begin{proof} Let us compute $d^{{\tilde \nabla}}h^{S^*}$. From the definition of $h^{S^*}$ and ${{\tilde \nabla}}$ we get immediately:
$$(d^{{\tilde \nabla}}h^{S^*})({\dfrac{\partial }{\partial
{{y}}_{i}}},{\dfrac{\partial }{\partial
{{y}}_{j}}})=0$$
$$(d^{{\tilde \nabla}}h^{S^*})(({\dfrac{\partial }{\partial
{{x}}^{i}}})^H,{\dfrac{\partial }{\partial
{{y}}_{j}}})=0$$
$$(d^{{\tilde \nabla}}h^{S^*})(({\dfrac{\partial }{\partial
{{x}}^{i}}})^H,({\dfrac{\partial }{\partial
{{x}}^{j}}})^H)({\dfrac{\partial }{\partial
{{y}}_{k}}})=-y_l R^l_{ijr}h^{kr}  $$
$$(d^{{\tilde \nabla}}h^{S^*})(({\dfrac{\partial }{\partial
{{x}}^{i}}})^H,({\dfrac{\partial }{\partial
{{x}}^{j}}})^H)({\dfrac{\partial }{\partial
{{x}}^{k}}})^H=(d^{\nabla}h)({\dfrac{\partial }{\partial
{{x}}^{i}}},{\dfrac{\partial }{\partial
{{x}}^{j}}})({\dfrac{\partial }{\partial
{{x}}^{k}}}).$$
Then we get the statement.
\end{proof}

Moreover, considering the Patterson-Walker metric, $\tilde h_{\pm}$, we get the following:

\begin{theorem}
Let $(M,h,\nabla)$ be a quasi-statistical manifold such that $\nabla$ is flat. Then $(T^*M, \tilde h_{\pm}, \tilde \nabla)$ is a quasi-statistical manifold.
\end{theorem}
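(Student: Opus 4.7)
The plan is to mimic the preceding proof for $h^{S^*}$ and verify $d^{\tilde\nabla}\tilde h_\pm=0$ by a direct local computation in the frame $\{X_i^H,\ \partial/\partial y_j\}$ of $T(T^*M)$. All pairings of $\tilde h_\pm$ on basis elements are constants ($0$, $\delta^j_i$, or $\pm\delta^j_i$), so each contribution to $d^{\tilde\nabla}\tilde h_\pm$ comes only from the Christoffel coefficients of $\tilde\nabla$ and the torsion $T^{\tilde\nabla}$ recorded in the propositions preceding. The evaluation splits naturally into four types of triples according to how many horizontal versus vertical frame fields appear among the three arguments, and I would dispose of them in increasing order of difficulty.

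The triples in which both of the first two arguments are vertical vanish trivially, since $\tilde\nabla_{\partial/\partial y_i}$ annihilates every frame field. The mixed triples of the form $(\partial/\partial y_i, X_j^H, \cdot)$ also vanish after a short bookkeeping: the only non-trivial covariant derivative contributions are matched exactly by the vertical torsion $T^{\tilde\nabla}(\partial/\partial y_i, X_j^H)$ given in the proposition preceding. In the all-horizontal case $(X_i^H, X_j^H, X_k^H)$, the two $\tilde\nabla$-terms produce outputs that are horizontal and therefore $\tilde h_\pm$-orthogonal to $X_k^H$, so the only surviving contribution is the vertical part $-y_m R^m_{ijq}\,\partial/\partial y_q$ of $T^{\tilde\nabla}(X_i^H, X_j^H)$; pairing this with $X_k^H$ gives $-y_m R^m_{ijk}$, which is where the flatness hypothesis $R^\nabla = 0$ enters.

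The decisive case is $(X_i^H, X_j^H, \partial/\partial y_c)$. Writing $\tilde\nabla_{X_i^H}(\partial/\partial y_c)=B^c_{ir}\,\partial/\partial y_r$ with $B^c_{ir}$ as in the proposition preceding, careful accounting shows that every $\Gamma$-contribution from the two covariant derivative terms is cancelled by a matching $\Gamma$-contribution from $\tilde h_\pm(T^{\tilde\nabla}(X_i^H, X_j^H), \partial/\partial y_c)=\pm(\Gamma^c_{ij}-\Gamma^c_{ji})$, leaving
$$(d^{\tilde\nabla}\tilde h_\pm)(X_i^H, X_j^H, \partial/\partial y_c) = \pm(B^c_{ji}-B^c_{ij}).$$
The hard part is to identify this with $\pm (d^\nabla h)\bigl(\partial/\partial x^i, \partial/\partial x^j, h^{-1}(dx^c)\bigr)$, so that the quasi-statistical hypothesis on $M$ closes the argument. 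The key identity will be obtained by applying $\nabla_{\partial/\partial x^i}$ to the constant pairing $h\bigl(h^{-1}(dx^c), \partial/\partial x^j\bigr)=\delta^c_j$, which gives
$$B^c_{ij}+\Gamma^c_{ij} = -(\nabla_{\partial/\partial x^i} h)\bigl(h^{-1}(dx^c), \partial/\partial x^j\bigr);$$
antisymmetrising in $(i,j)$ and using the symmetry (or skew-symmetry) of $h$ to swap the two arguments of $(\nabla h)$ and of $h(T^\nabla, \cdot)$ then produces $\pm (d^\nabla h)(\partial/\partial x^i, \partial/\partial x^j, h^{-1}(dx^c))$, which vanishes by the quasi-statistical assumption on $(M, h, \nabla)$. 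Assembling the four cases gives $d^{\tilde\nabla}\tilde h_\pm = 0$, and since $\tilde h_\pm$ is non-degenerate from its $\delta$-block, $(T^*M, \tilde h_\pm, \tilde\nabla)$ is a quasi-statistical manifold.
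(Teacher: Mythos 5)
Your proposal is correct and follows essentially the same route as the paper: a case-by-case evaluation of $d^{\tilde\nabla}\tilde h_\pm$ on the horizontal/vertical frame of $T(T^*M)$, with the all-horizontal case killed by $R^\nabla=0$ and the $(X_i^H,X_j^H,\partial/\partial y_c)$ case reduced (via differentiating $h(h^{-1}(dx^c),\partial/\partial x^j)=\delta^c_j$) to $d^\nabla h=0$. You merely make explicit the computations the paper dismisses with ``we get immediately,'' and your intermediate formulas agree with the paper's displayed ones.
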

\begin{proof} Let us compute $d^{{\tilde \nabla}}\tilde h_{\pm}$. From the definition of $\tilde h_{\pm}$ and ${{\tilde \nabla}}$ we get immediately:
$$(d^{{\tilde \nabla}}\tilde h_{\pm})({\dfrac{\partial }{\partial
{{y}}_{i}}},{\dfrac{\partial }{\partial
{{y}}_{j}}})=0$$
$$(d^{{\tilde \nabla}}\tilde h_{\pm})(({\dfrac{\partial }{\partial
{{x}}^{i}}})^H,{\dfrac{\partial }{\partial
{{y}}_{j}}})=0$$
$$(d^{{\tilde \nabla}}\tilde h_{\pm})(({\dfrac{\partial }{\partial
{{x}}^{i}}})^H,({\dfrac{\partial }{\partial
{{x}}^{j}}})^H)({\dfrac{\partial }{\partial
{{x}}^{k}}})^H=-y_l R^l_{ijk}  $$
$$(d^{{\tilde \nabla}}\tilde h_{\pm})(({\dfrac{\partial }{\partial
{{x}}^{i}}})^H,({\dfrac{\partial }{\partial
{{x}}^{j}}})^H)({\dfrac{\partial }{\partial
{{y}}_{k}}})=\pm h^{kl}(d^{\nabla}h)({\dfrac{\partial }{\partial
{{x}}^{i}}},{\dfrac{\partial }{\partial
{{x}}^{j}}})({\dfrac{\partial }{\partial
{{x}}^{l}}}),$$
where the sign $+$ is for $h$ symmetric, $-$ is for $h$ skew-symmetric. Then we get the statement.
\end{proof}

\begin{definition}
A quasi-statistical manifold $(M,h,\nabla)$ such that $\nabla$ is flat is called a \textit{ Hessian manifold}.
\end{definition}

Therefore we get:
\begin{corollary}
If $(M,h,\nabla)$ is a Hessian manifold, then $(T^*M, h^{S^*}, {\tilde\nabla})$ and $(T^*M, \tilde h_{\pm}, {\tilde\nabla})$ are Hessian manifolds.
\end{corollary}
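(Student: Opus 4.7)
The proof is an immediate synthesis of three results already established in this section, so the plan is essentially to assemble them rather than to compute anything new.

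First I would recall the definition: $(M,h,\nabla)$ being a Hessian manifold means both that $d^{\nabla}h=0$ (quasi-statistical condition) and that the curvature $R^{\nabla}$ vanishes (flatness). From these two hypotheses I need to extract, for each of the two candidate structures on $T^*M$, both the quasi-statistical condition for the pull-back connection $\tilde{\nabla}$ and its flatness.

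The quasi-statistical half is handled by the two preceding theorems: the first gives $d^{\tilde{\nabla}}h^{S^*}=0$ from $d^{\nabla}h=0$ together with $R^{\nabla}=0$ (the latter being used to kill the term $-y_l R^l_{ijr}h^{kr}$ appearing in the local coordinate computation), and the second gives $d^{\tilde{\nabla}}\tilde{h}_{\pm}=0$ under the same two hypotheses (with $R^{\nabla}=0$ again killing the $-y_l R^l_{ijk}$ term). Both theorems already incorporate the flatness hypothesis, so no new calculation is required on this side. The flatness half is then delivered directly by the proposition that states $\nabla$ is flat if and only if $\tilde{\nabla}$ is flat; since $\nabla$ is flat by hypothesis, so is $\tilde{\nabla}$.

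Combining these, both $(T^*M, h^{S^*}, \tilde{\nabla})$ and $(T^*M, \tilde{h}_{\pm}, \tilde{\nabla})$ satisfy the two defining clauses of a Hessian manifold, which concludes the argument. There is no real obstacle here, since everything has been done in the preceding results; the only thing to be careful about is to cite the correct preceding theorem for each of the two cases and to invoke the flatness equivalence once.
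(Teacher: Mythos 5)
Your proposal is correct and takes exactly the route the paper intends: the corollary is presented as an immediate consequence (``Therefore we get'') of the two preceding theorems, which supply $d^{\tilde\nabla}h^{S^*}=0$ and $d^{\tilde\nabla}\tilde h_{\pm}=0$ under the hypotheses $d^{\nabla}h=0$ and $R^{\nabla}=0$, combined with the proposition that $\nabla$ is flat if and only if $\tilde\nabla$ is flat. Nothing further is needed, and your identification of where the flatness hypothesis enters each computation is accurate.
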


\subsection{Quasi-statistical structures on tangent bundles}

Given a non-degenerate $(0,2)$-tensor field $h$ on $M$, we have an isomorphism between $T(T^*M)$ and $T(TM)$. The connection $\tilde{\tilde \nabla}$ on $TM$ corresponding to $\tilde \nabla$ on $T^*M$, is the following:
$$\tilde{\tilde \nabla}_{(\dfrac{\partial }{\partial
{{x}}^{i}})^H} (\dfrac{\partial }{\partial
{{x}}^{j}})^H={\Gamma}^k_{ij} (\dfrac{\partial }{\partial
{{x}}^{k}})^H$$
$$\tilde{\tilde \nabla}_{(\dfrac{\partial }{\partial
{{x}}^{i}})^H}\dfrac{\partial }{\partial
{{y}}^{j}}={\Gamma}^k_{ij}\dfrac{\partial }{\partial
{{y}}^{k}}$$
$$\tilde{\tilde \nabla}_{\dfrac{\partial }{\partial
{{y}}^{j}}} (\dfrac{\partial }{\partial
{{x}}^{i}})^H=0$$
$$\tilde{\tilde \nabla}_{\dfrac{\partial }{\partial
{{y}}^{i}}} \dfrac{\partial }{\partial
{{y}}^{j}}=0.$$

In local coordinates, the torsion $T^{\tilde{\tilde \nabla}}$ of $\tilde{\tilde \nabla}$ is:
$$T^{\tilde{\tilde \nabla}}({(\dfrac{\partial }{\partial
{{x}}^{i}})^H},{(\dfrac{\partial }{\partial
{{x}}^{j}})^H})=({\Gamma}^k_{ij}-{\Gamma}^k_{ji}) (\dfrac{\partial }{\partial
{{x}}^{k}})^H-y^l R^k_{ijl}{\dfrac{\partial }{\partial
{{y}}^{k}}}  $$
$$T^{\tilde{\tilde \nabla}}({\dfrac{\partial }{\partial
{{y}}^{i}}},{(\dfrac{\partial }{\partial
{{x}}^{j}})^H})=0 $$
$$T^{\tilde{\tilde \nabla}}({\dfrac{\partial }{\partial
{{y}}^{i}}},{\dfrac{\partial }{\partial
{{y}}^{j}}})=0 $$
and the curvature $R^{\tilde{\tilde \nabla}}$ of $\tilde{\tilde \nabla}$ is:
$$R^{\tilde{\tilde \nabla}}({\dfrac{\partial }{\partial
{{y}}^{i}}},{\dfrac{\partial }{\partial
{{y}}^{j}}})=0$$
$$R^{\tilde{\tilde \nabla}}(({\dfrac{\partial }{\partial
{{x}}^{i}}})^H,{\dfrac{\partial }{\partial
{{y}}^{j}}})=0$$
$$R^{\tilde{\tilde \nabla}}(({\dfrac{\partial }{\partial
{{x}}^{i}}})^H,({\dfrac{\partial }{\partial
{{x}}^{j}}})^H){\dfrac{\partial }{\partial
{{y}}^{k}}}=R^{l}_{ijk}{\dfrac{\partial }{\partial
{{y}}^{l}}}$$
$$R^{\tilde{\tilde \nabla}}(({\dfrac{\partial }{\partial
{{x}}^{i}}})^H,({\dfrac{\partial }{\partial
{{x}}^{j}}})^H)({\dfrac{\partial }{\partial
{{x}}^{k}}})^H=R^{\nabla}({\dfrac{\partial }{\partial
{{x}}^{i}}},{\dfrac{\partial }{\partial
{{x}}^{j}}}){\dfrac{\partial }{\partial
{{x}}^{k}}}.$$

Therefore we get:

\begin{proposition}
$\nabla$ is flat if and only if $\tilde { \tilde \nabla}$ is flat.
\end{proposition}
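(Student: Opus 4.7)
The plan is to simply read off the result from the explicit curvature formulas for $\tilde{\tilde \nabla}$ that have just been recorded in local coordinates above the proposition. Those four formulas completely determine $R^{\tilde{\tilde \nabla}}$ on a local frame of $T(TM)$ consisting of horizontal lifts $(\partial/\partial x^i)^H$ and vertical vectors $\partial/\partial y^j$, so establishing the equivalence reduces to inspecting each component.

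First I would handle the direction $\nabla$ flat $\Rightarrow$ $\tilde{\tilde \nabla}$ flat: assuming $R^\nabla = 0$, all coefficients $R^l_{ijk}$ vanish identically on $M$, hence the third and fourth displayed formulas both vanish, while the first two are already identically zero. Since these four types exhaust all arguments in the local frame and $R^{\tilde{\tilde \nabla}}$ is tensorial, $R^{\tilde{\tilde \nabla}} = 0$ on $TM$.

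For the converse, assume $\tilde{\tilde \nabla}$ is flat. The fourth formula gives
$$R^{\tilde{\tilde \nabla}}\left(\left(\tfrac{\partial}{\partial x^{i}}\right)^H,\left(\tfrac{\partial}{\partial x^{j}}\right)^H\right)\left(\tfrac{\partial}{\partial x^{k}}\right)^H = R^{\nabla}\left(\tfrac{\partial}{\partial x^{i}},\tfrac{\partial}{\partial x^{j}}\right)\tfrac{\partial}{\partial x^{k}},$$
and the left-hand side vanishes by hypothesis. Since $R^\nabla$ is tensorial on $M$ and its values on the coordinate frame $\{\partial/\partial x^i\}$ determine it completely, we conclude $R^\nabla = 0$.

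The only possible obstacle is a bookkeeping one, namely making sure the identification $T(TM) \cong \pi^*(TM \oplus T^*M)$ used to transport $\tilde \nabla$ to $\tilde{\tilde \nabla}$ actually produces the local formulas for the curvature stated above; but since those formulas are given as established facts preceding the proposition, the proof is essentially a one-line inspection in each direction. No further computation is required.
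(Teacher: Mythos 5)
Your proposal is correct and coincides with the paper's own (implicit) argument: the proposition is stated as an immediate consequence of the displayed local curvature formulas for $R^{\tilde{\tilde\nabla}}$, whose only nonzero components are $R^l_{ijk}$ and $R^\nabla(\partial/\partial x^i,\partial/\partial x^j)\partial/\partial x^k$, so both vanish exactly when $R^\nabla=0$. Nothing further is needed.
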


\begin{theorem}
Let $(M,h,\nabla)$ be a quasi-statistical manifold such that $\nabla$ is flat. Then $(TM, h^S, \tilde {\tilde\nabla})$ is a flat quasi-statistical manifold.
\end{theorem}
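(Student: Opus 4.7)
The plan is to carry out in local coordinates the exact analogue of the proof of the preceding theorem for $(T^*M, h^{S^*}, \tilde{\nabla})$. Using the explicit formulas for $\tilde{\tilde{\nabla}}$, $T^{\tilde{\tilde{\nabla}}}$, and $R^{\tilde{\tilde{\nabla}}}$ displayed just above, together with the values $h^S((\partial/\partial x^i)^H,(\partial/\partial x^j)^H)=h_{ij}=h^S(\partial/\partial y^i,\partial/\partial y^j)$ and $h^S((\partial/\partial x^i)^H,\partial/\partial y^j)=0$, I would evaluate $d^{\tilde{\tilde{\nabla}}} h^S$ on each combinatorial type of triple drawn from the local adapted frame $\{(\partial/\partial x^i)^H, \partial/\partial y^j\}$ of $T(TM)$.

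As in the cotangent proof, by the antisymmetry of $d^{\tilde{\tilde{\nabla}}} h^S$ in its first two arguments only four cases remain. The all-vertical case is immediate, since vertical derivations in $\tilde{\tilde{\nabla}}$ kill every Christoffel term, $h_{ij}$ is independent of the fibre coordinate $y$, and $T^{\tilde{\tilde{\nabla}}}$ vanishes on two vertical arguments. The mixed horizontal-vertical case should parallel the cotangent computation, using $T^{\tilde{\tilde{\nabla}}}(\partial/\partial y^i,(\partial/\partial x^j)^H)=0$ and the vanishing of the off-diagonal $h^S$-pairings. The triple with two horizontal arguments and one vertical contributes only through the vertical component $-y^l R^k_{ijl}\partial/\partial y^k$ of the torsion $T^{\tilde{\tilde{\nabla}}}((\partial/\partial x^i)^H,(\partial/\partial x^j)^H)$, paired via $h^S$ with the vertical third slot; this is zero by $R^\nabla=0$. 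The all-horizontal triple reduces to the two Codazzi-type derivatives plus the horizontal part $(\Gamma^k_{ij}-\Gamma^k_{ji})(\partial/\partial x^k)^H$ of the torsion paired against a horizontal vector, so the total is $(d^\nabla h)(\partial/\partial x^i,\partial/\partial x^j)(\partial/\partial x^k)=0$ by the quasi-statistical hypothesis.

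Flatness of $\tilde{\tilde{\nabla}}$ is immediate from the preceding proposition, which already states that $R^\nabla=0$ if and only if $R^{\tilde{\tilde{\nabla}}}=0$, so no further work is needed there. The main obstacle is pure bookkeeping: one must verify that the parallel Christoffel expressions in $\tilde{\tilde{\nabla}}_{X^H}(\partial/\partial x^j)^H=\Gamma^k_{ij}(\partial/\partial x^k)^H$ and $\tilde{\tilde{\nabla}}_{X^H}\partial/\partial y^j=\Gamma^k_{ij}\partial/\partial y^k$ combine properly with the diagonal coefficients $h_{ij}$ of $h^S$ on horizontal-horizontal and vertical-vertical frame pairs, so that the Codazzi part separates cleanly from the curvature contribution carried by the vertical piece of $T^{\tilde{\tilde{\nabla}}}$, exactly as in the cotangent display. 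No input beyond $R^\nabla=0$ and $d^\nabla h=0$ should then be required.
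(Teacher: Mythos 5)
Your proposal follows the same route as the paper's proof (evaluate $d^{\tilde{\tilde\nabla}}h^S$ case by case on the adapted frame $\{(\partial/\partial x^i)^H,\partial/\partial y^j\}$), and three of your four cases are fine; the gap is in the mixed horizontal--vertical case. You claim it vanishes because $T^{\tilde{\tilde\nabla}}(\partial/\partial y^i,(\partial/\partial x^j)^H)=0$ and because the off-diagonal $h^S$-pairings vanish. But when the third slot is also vertical, the pairing that enters is the \emph{diagonal} one, $h^S(\partial/\partial y^j,\partial/\partial y^k)=h_{jk}$, and a direct computation gives
$$\bigl(d^{\tilde{\tilde\nabla}}h^S\bigr)\Bigl(\bigl(\tfrac{\partial}{\partial x^i}\bigr)^H,\tfrac{\partial}{\partial y^j},\tfrac{\partial}{\partial y^k}\Bigr)=\partial_i h_{jk}-\Gamma^l_{ij}h_{lk}-\Gamma^l_{ik}h_{jl}=(\nabla_{\partial/\partial x^i}h)\bigl(\tfrac{\partial}{\partial x^j},\tfrac{\partial}{\partial x^k}\bigr),$$
since the second covariant-derivative term and the torsion term are both zero. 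Nothing cancels this. Note that this is precisely the point where the tangent case \emph{differs} from the cotangent case you are modelling it on: there, the mixed case closes only because the mixed torsion $T^{\tilde\nabla}(\partial/\partial y_i,X_j^H)$ is \emph{nonzero} and exactly compensates the failure of $h^{S^*}$ to be $\tilde\nabla$-parallel on vertical pairs; here the mixed torsion vanishes, so the compensation mechanism is absent.

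The surviving term $(\nabla h)$ does not vanish for a general quasi-statistical manifold with flat $\nabla$: take $M=\mathbb{R}^n$ with the standard flat torsion-free connection and $h_{jk}=\partial_j\partial_k\varphi$ for a convex $\varphi$; then $d^\nabla h=0$ and $R^\nabla=0$, yet $(\nabla_{\partial_i}h)_{jk}=\partial_i\partial_j\partial_k\varphi\neq 0$ in general. So the step as written would fail, and the argument only goes through under the additional hypothesis $\nabla h=0$ --- the hypothesis that is explicitly imposed in the companion statement about the horizontal lift metric $h^H$. (For transparency: the paper's own proof asserts the same identity $(d^{\tilde{\tilde\nabla}}h^S)((\partial/\partial x^i)^H,\partial/\partial y^j)=0$ without displaying the computation, so your proposal reproduces rather than introduces this problem; but since you were asked to justify each case, you should either supply the missing cancellation --- which I do not believe exists --- or add the hypothesis $\nabla h=0$.)
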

\begin{proof} Let us compute $d^{\tilde{\tilde \nabla}}h^S$. From the definition of $h^S$ and ${\tilde{\tilde \nabla}}$ we get immediately:
$$(d^{\tilde{\tilde \nabla}}h^S)({\dfrac{\partial }{\partial
{{y}}^{i}}},{\dfrac{\partial }{\partial
{{y}}^{j}}})=0$$
$$(d^{\tilde{\tilde \nabla}}h^S)(({\dfrac{\partial }{\partial
{{x}}^{i}}})^H,{\dfrac{\partial }{\partial
{{y}}^{j}}})=0$$
$$(d^{\tilde{\tilde \nabla}}h^S)(({\dfrac{\partial }{\partial
{{x}}^{i}}})^H,({\dfrac{\partial }{\partial
{{x}}^{j}}})^H)({\dfrac{\partial }{\partial
{{y}}^{k}}})=-y^l R^r_{ijl}h_{rk}  $$
$$(d^{\tilde{\tilde \nabla}}h^S)(({\dfrac{\partial }{\partial
{{x}}^{i}}})^H,({\dfrac{\partial }{\partial
{{x}}^{j}}})^H)({\dfrac{\partial }{\partial
{{x}}^{k}}})^H=(d^{\nabla}h)({\dfrac{\partial }{\partial
{{x}}^{i}}},{\dfrac{\partial }{\partial
{{x}}^{j}}})({\dfrac{\partial }{\partial
{{x}}^{k}}}).$$
Then we get the statement.
\end{proof}

Moreover, considering the horizontal lift metric, $h^H$, we get the following:
\begin{theorem}
Let $(M,h,\nabla)$ be a quasi-statistical manifold such that $\nabla$ is flat. Then $(TM, h^H, \tilde {\tilde\nabla})$ is a quasi-statistical manifold if and only if $\nabla h=0$.
\end{theorem}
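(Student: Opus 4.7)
The plan is to verify $d^{\tilde{\tilde \nabla}} h^H = 0$ by evaluating this trilinear form on the adapted local frame $\{X_i^H, \partial/\partial y^j\}$ of $T(TM)$, using the explicit formulas for $\tilde{\tilde\nabla}$ recalled just above. Since $d^{\tilde{\tilde \nabla}} h^H$ is antisymmetric in its first two arguments, only a handful of distinct type-patterns (H or V in each slot) need to be checked. Two structural simplifications will be used throughout: the flatness of $\nabla$ removes the curvature piece $-y^l R^k_{ijl}\partial/\partial y^k$ from $T^{\tilde{\tilde \nabla}}(X_i^H,X_j^H)$, so that the horizontal torsion reduces to the horizontal lift of $T^\nabla$; and the quasi-statistical hypothesis $d^\nabla h=0$ on $M$.

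Most cases vanish for purely structural reasons. Since $h^H$ is zero on $(H,H)$- and $(V,V)$-pairs, and since the vertical derivations $\tilde{\tilde \nabla}_{\partial/\partial y^j}$ annihilate every element of the frame, triples with both first entries vertical, the purely horizontal triple $(X_i^H,X_j^H,X_k^H)$, and the triple $(X_i^H,\partial/\partial y^j,\partial/\partial y^k)$ all contribute zero. The case $(X_i^H,X_j^H,\partial/\partial y^k)$ is the first where the input data intervene: expanding by means of $h^H(X_j^H,\partial/\partial y^k)=h_{jk}$, the Christoffel formulas for $\tilde{\tilde \nabla}$, and the simplified horizontal torsion, one obtains $(\nabla_i h)_{jk}-(\nabla_j h)_{ik}+h(T^\nabla(\partial/\partial x^i,\partial/\partial x^j),\partial/\partial x^k)=(d^\nabla h)(\partial/\partial x^i,\partial/\partial x^j,\partial/\partial x^k)$, which vanishes by the quasi-statistical assumption.

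The decisive case is $(X_i^H,\partial/\partial y^j,X_k^H)$. Here the $\tilde{\tilde \nabla}_{\partial/\partial y^j}$-term vanishes because the vertical derivation is trivial on the frame, the torsion contribution disappears because $T^{\tilde{\tilde \nabla}}(X_i^H,\partial/\partial y^j)=0$, and the remaining piece $(\tilde{\tilde \nabla}_{X_i^H} h^H)(\partial/\partial y^j,X_k^H)$ collapses, via $h^H(\partial/\partial y^j,X_k^H)=h_{jk}$, $\tilde{\tilde \nabla}_{X_i^H}\partial/\partial y^j=\Gamma^l_{ij}\partial/\partial y^l$, and $\tilde{\tilde \nabla}_{X_i^H}X_k^H=\Gamma^l_{ik}X_l^H$, to exactly $\partial h_{jk}/\partial x^i-\Gamma^l_{ij}h_{lk}-\Gamma^l_{ik}h_{jl}=(\nabla_i h)_{jk}$. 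Consequently $d^{\tilde{\tilde \nabla}} h^H=0$ if and only if $(\nabla_i h)_{jk}=0$ for all $i,j,k$, that is, $\nabla h=0$. The only real hazard is keeping track of the convention $h^H(X^V,Y^H)=h(X,Y)$, which is the one consistent with the derivation of $\bar h$ in the previous subsection; once that is pinned down, every step above is purely computational.
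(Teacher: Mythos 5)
Your computation is correct and follows the same route as the paper: evaluate $d^{\tilde{\tilde\nabla}}h^H$ on the adapted frame, kill the curvature part of the horizontal torsion by flatness, identify the $(X_i^H,X_j^H,\partial/\partial y^k)$ component with $(d^\nabla h)_{ijk}=0$, and isolate the $(X_i^H,\partial/\partial y^j,X_k^H)$ component as $\pm(\nabla_i h)_{jk}$, which is the sole obstruction. The only caveat is that for skew-symmetric $h$ the convention forces $h^H(X^V,Y^H)=h(Y,X)=-h(X,Y)$, which is where the paper's overall $\pm$ sign comes from, but this does not affect the equivalence with $\nabla h=0$.
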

\begin{proof} Let us compute $d^{\tilde{\tilde \nabla}}h^H$. From the definition of $h^H$ and ${\tilde{\tilde \nabla}}$ we get immediately:
$$(d^{\tilde{\tilde \nabla}}h^H)({\dfrac{\partial }{\partial
{{y}}^{i}}},{\dfrac{\partial }{\partial
{{y}}^{j}}})=0$$
$$(d^{\tilde{\tilde \nabla}}h^H)(({\dfrac{\partial }{\partial
{{x}}^{i}}})^H,{\dfrac{\partial }{\partial
{{y}}^{j}}})=\pm ({\nabla}_{\dfrac{\partial }{\partial
{{x}}^{i}}} h){\dfrac{\partial }{\partial
{{x}}^{j}}}$$
$$(d^{\tilde{\tilde \nabla}}h^H)(({\dfrac{\partial }{\partial
{{x}}^{i}}})^H,({\dfrac{\partial }{\partial
{{x}}^{j}}})^H)({\dfrac{\partial }{\partial
{{x}}^{k}}})^H=-y^l R^s_{ijl}h_{sk} $$
$$(d^{\tilde{\tilde \nabla}}h^H)(({\dfrac{\partial }{\partial
{{x}}^{i}}})^H,({\dfrac{\partial }{\partial
{{x}}^{j}}})^H) ({\dfrac{\partial }{\partial
{{y}^{k}}}})=(d^{\nabla}h)({\dfrac{\partial }{\partial
{{x}}^{i}}},{\dfrac{\partial }{\partial
{{x}}^{j}}})({\dfrac{\partial }{\partial
{{x}}^{k}}}),$$
where the sign $+$ is for $h$ symmetric, $-$ is for $h$ skew-symmetric. Then we get the statement.
\end{proof}

Therefore we get:

\begin{corollary}
If $(M,h,\nabla)$ is a Hessian manifold, then $(TM, h^S, \tilde {\tilde\nabla})$ is a Hessian manifold. Moreover, if $\nabla h =0$, then $(TM, h^H, \tilde {\tilde\nabla})$ is a Hessian manifold.
\end{corollary}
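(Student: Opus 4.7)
The statement is essentially a packaging of the two preceding theorems together with the definition of a Hessian manifold and the proposition asserting that $\nabla$ is flat if and only if $\tilde{\tilde\nabla}$ is flat. My plan is therefore to unwind the definitions and invoke the results already established, rather than performing any new computation on $TM$.

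First I would recall that, by definition, $(M,h,\nabla)$ being a Hessian manifold means exactly that $(h,\nabla)$ is a quasi-statistical structure with $\nabla$ flat. This immediately puts us in the hypothesis of the preceding theorem asserting that $(TM, h^S, \tilde{\tilde\nabla})$ is a flat quasi-statistical manifold. That theorem delivers $d^{\tilde{\tilde\nabla}} h^S = 0$, so $(h^S, \tilde{\tilde\nabla})$ is a quasi-statistical structure on $TM$. To conclude that the resulting structure is Hessian, I would then apply the proposition stating that $\nabla$ is flat if and only if $\tilde{\tilde\nabla}$ is flat: since $\nabla$ is flat by hypothesis, so is $\tilde{\tilde\nabla}$, and the first assertion follows.

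For the second assertion I would argue analogously. Assuming additionally that $\nabla h = 0$, the preceding theorem concerning the horizontal lift metric $h^H$ applies (since flatness of $\nabla$ is already in force and the $\nabla$-parallelism of $h$ is precisely the extra hypothesis of that theorem), yielding $d^{\tilde{\tilde\nabla}} h^H = 0$. Combined again with the proposition that transports flatness from $\nabla$ to $\tilde{\tilde\nabla}$, this shows that $(TM, h^H, \tilde{\tilde\nabla})$ is a quasi-statistical manifold with flat connection, hence a Hessian manifold.

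No technical obstacle is expected: all the analytic content lies in the two preceding theorems and in the flatness proposition, and the only care to take is bookkeeping, namely making sure that in each case both conditions in the definition of Hessian manifold, the quasi-statistical identity $d^{\tilde{\tilde\nabla}} \cdot = 0$ and the flatness of $\tilde{\tilde\nabla}$, are indeed supplied by the hypotheses available. If anything is slightly subtle, it is remembering that the second half of the statement genuinely requires the additional assumption $\nabla h = 0$, because the horizontal lift theorem is an \emph{if and only if} and fails without $\nabla$-parallelism of $h$.
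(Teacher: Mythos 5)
Your proposal is correct and is exactly the argument the paper intends: the corollary is stated as an immediate consequence of the two preceding theorems, the proposition that $\nabla$ is flat if and only if $\tilde{\tilde\nabla}$ is flat, and the definition of a Hessian manifold, with no further computation required. The paper gives no written proof beyond ``Therefore we get,'' so your careful bookkeeping (including noting that the second assertion needs the extra hypothesis $\nabla h=0$ because of the ``if and only if'' in the horizontal-lift theorem) fully matches and even slightly elaborates the intended reasoning.
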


\section{Norden and Para-Norden structures on cotangent and tangent bundles}

Norden manifolds, also called almost complex manifolds with B-metric, were introduced in \cite{no}. They have applications in mathematics and in theoretical physics.

\begin{definition}
A \textit{Norden manifold}, $(M,J,h)$, is an almost complex manifold $(M,J)$ with a pseudo-Riemannian metric, $h$ (called Norden metric), such that $J$ is $h$-symmetric.

Moreover, if $J$ is integrable, then $(M,J,h)$ is called \textit{complex Norden manifold}.
\end{definition}

\begin{definition} An \textit{almost Para-complex Norden manifold} (or simply, \textit{almost Para-Norden manifold}), $(M,J,h)$, is a real even dimensional smooth manifold $M$ with a pseudo-Riemannian metric, $h$, and a $(1,1)$-tensor field, $J$, such that $J^2=I$, the two eigenbundles $T^+M$, $T^-M$, associated to the two eigenvalues $+1$, $-1$, of $J$ respectively have the same rank and $J$ is $h$-symmetric.

Moreover, if $J$ is integrable, then $(M,J,h)$ is called \textit{Para-Norden manifold}.
\end{definition}

\subsection{Norden and Para-Norden structures on cotangent bundles}

Let $(M,h)$ be a pseudo-Riemannian manifold and let ${\hat J}_c$, ${\hat J}_p$ be the generalized complex structure and the generalized product structure defined by $h$ in (\ref{m10}) and (\ref{m20}) respectively. Again we will denote $\hat{J}_\mp$ for $\hat{J}_c=:\hat{J}_-$ and $\hat{J}_p=:\hat{J}_+$.

Let $\nabla$ be an affine connection on $M$ and let ${\Phi}^{\nabla} :TM \oplus T^{*}M \rightarrow T(T^*M)$ be the bundle morphism defined by (\ref{m30}). We define:
$${\tilde{J}}^{\nabla}_\mp=:{\Phi}^{\nabla} \circ {\hat J}_{\mp} \circ ({\Phi}^{\nabla})^{-1}.$$
We have immediately that $({\tilde{J}}^{\nabla}_\mp)^2=\mp I$.

\begin{proposition}  Let $\tilde h$ be the Patterson-Walker metric on $T^*M$. Then $(T^*M,{\tilde{J}}^{\nabla}_-,\tilde h)$ is a Norden manifold and  $(T^*M,{\tilde{J}}^{\nabla}_+,\tilde h)$ is an almost Para-Norden manifold. Moreover, if $(M,h,\nabla)$ is a flat quasi-statistical manifold, then $(T^*M,{\tilde{J}}^{\nabla}_-,\tilde h)$ is a complex Norden manifold and  $(T^*M,{\tilde{J}}^{\nabla}_+,\tilde h)$ is a Para-Norden manifold.
\end{proposition}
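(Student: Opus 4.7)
The plan is to reduce the entire proposition, via the bundle isomorphism $\Phi^\nabla$, to facts about $\hat{J}_\mp$ and the pullback tensor $\tilde{\tilde h}$ already established earlier in the paper, handling the (almost) Norden / (almost) Para-Norden assertion first and then the integrability assertion under flatness and the quasi-statistical hypothesis.

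For the algebraic structure, the squaring identities $(\tilde{J}^\nabla_\mp)^2 = \mp I$ are immediate from the conjugation definition and are already noted in the excerpt. The equal-rank property of the $\pm 1$-eigenbundles of $\tilde{J}^\nabla_+$ is inherited from $\hat{J}_p$, whose eigenvectors at each point are of the form $h^{-1}(\alpha) \pm \alpha$, giving two sub-bundles each of rank $n = \dim M$. For the Norden-type symmetry $\tilde h(\tilde{J}^\nabla_\mp A, B) = \tilde h(A, \tilde{J}^\nabla_\mp B)$, I would pull back along $\Phi^\nabla$: writing $A = \Phi^\nabla \sigma$ and $B = \Phi^\nabla \tau$, the identity reduces to $\tilde{\tilde h}(\hat{J}_\mp \sigma, \tau) = \tilde{\tilde h}(\sigma, \hat{J}_\mp \tau)$. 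Since an earlier proposition gives $\tilde{\tilde h}_+ = -2\langle\cdot,\cdot\rangle$, this is exactly the symmetry of $\hat{J}_c$ and $\hat{J}_p$ with respect to $\langle\cdot,\cdot\rangle$ asserted in the first remark of the paper when $h$ is symmetric.

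For the integrability assertion, I need vanishing of the usual Nijenhuis tensor of $\tilde{J}^\nabla_\mp$ on $T^*M$. The key technical step, which I expect to be the main obstacle, is the bracket-intertwining lemma: when $\nabla$ is flat, $\Phi^\nabla$ carries the $\nabla$-bracket on $TM \oplus T^*M$ to the ordinary Lie bracket on vector fields of $T^*M$. Using the standard identities
\[
[X^H,Y^H] = [X,Y]^H - (R^\nabla(X,Y)\cdot)^V, \quad [X^H,\alpha^V] = (\nabla_X \alpha)^V, \quad [\alpha^V,\beta^V] = 0,
\]
the curvature term vanishes by flatness and the remaining expression is precisely $\Phi^\nabla([X+\alpha, Y+\beta]_\nabla) = [X,Y]^H + (\nabla_X\beta - \nabla_Y\alpha)^V$. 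Once this intertwining is in place, the Nijenhuis tensors transport as $N_{\tilde{J}^\nabla_\mp}(\Phi^\nabla \sigma, \Phi^\nabla \tau) = \Phi^\nabla\bigl(N^\nabla_{\hat{J}_\mp}(\sigma,\tau)\bigr)$; by the earlier proposition of Section~2, the right-hand side vanishes precisely because $(M,h,\nabla)$ is quasi-statistical, so $\tilde{J}^\nabla_-$ is integrable as an almost complex structure and $\tilde{J}^\nabla_+$ as an almost product structure, yielding the complex Norden and Para-Norden conclusions.
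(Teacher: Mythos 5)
Your proposal is correct, and while the first (algebraic) half is essentially the paper's argument in disguise, the integrability half takes a genuinely different route. For the Norden symmetry, the paper computes the four pairings $\tilde h({\tilde{J}}^{\nabla}_\mp(X_i^H),X_j^H)=h_{ij}$, $\tilde h({\tilde{J}}^{\nabla}_\mp(\partial/\partial y_i),\partial/\partial y_j)=\mp h^{ij}$, etc., in local coordinates and invokes the symmetry of $h$; your reduction via ${\tilde{\tilde h}}_+=-2<\cdot,\cdot>$ to the identity $<\hat{J}_\mp\sigma,\tau>=<\sigma,\hat{J}_\mp\tau>$ from the Remark in Section 2 is the same computation packaged more structurally, and you additionally make explicit the equal-rank condition on the eigenbundles of ${\tilde{J}}^{\nabla}_+$, which the paper leaves implicit. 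The real divergence is in the integrability step: the paper computes the ordinary Nijenhuis tensor $N_{{\tilde{J}}^{\nabla}_\mp}$ explicitly in local coordinates on $T^*M$ and exhibits it as a combination of $d^{\nabla}h$ terms and curvature terms $y_sR^s_{\cdot\cdot\cdot}$, whereas you prove that under flatness $\Phi^{\nabla}$ intertwines $[\cdot,\cdot]_{\nabla}$ with the Lie bracket on $T(T^*M)$ (the curvature being exactly the defect in $[X^H,Y^H]=[X,Y]^H+(\mathrm{curv})^V$, with no torsion contribution) and then transport the vanishing of $N^{\nabla}_{\hat{J}_\mp}$ from the Proposition of Section 2. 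Both are valid. The paper's coordinate formulas carry finer information — they separate the $d^{\nabla}h$-obstruction from the curvature obstruction even without the flatness hypothesis — while your bracket-intertwining lemma is shorter, coordinate-free, makes transparent why flatness is precisely the needed hypothesis, and reuses the $\nabla$-integrability result already proved on $TM\oplus T^*M$. If you write it up, state the intertwining lemma carefully for arbitrary (not just coordinate) sections, noting that $C^{\infty}(M)$-linearity over the base and the tensoriality of the Nijenhuis tensor let you check vanishing on the frame of lifted fields.
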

\begin{proof} In local coordinates, we get the following:
$${\tilde{J}}^{\nabla}_\mp (X_i^H)=:h_{ik}{\dfrac{\partial }{\partial
{{y}}_{k}}}$$
$${\tilde{J}}^{\nabla}_\mp (\dfrac{\partial }{\partial
{{y}}_{j}})=:\mp h^{jk}X^H_k.$$

In particular, we have:
$$\tilde h ({\tilde{J}}^{\nabla}_\mp (X_i^H),X^H_j)=h_{ij}$$
$$\tilde h ({\tilde{J}}^{\nabla}_\mp ({\dfrac{\partial }{\partial
{{y}}_{i}}}),{\dfrac{\partial }{\partial
{{y}}_{j}}})=\mp h^{ij}$$
$$\tilde h ({\tilde{J}}^{\nabla}_\mp (X_i^H),{\dfrac{\partial }{\partial
{{y}}_{j}}})=0$$
$$\tilde h ({\tilde{J}}^{\nabla}_\mp ({\dfrac{\partial }{\partial
{{y}}_{i}}}),X^H_j)=0,$$
therefore, from the symmetry of $h$, we get the first statement.

Moreover, if we compute the Nijenhuis tensor field of ${{\tilde{J}}^{\nabla}_\mp}$, we have:
$$N_{{\tilde{J}}^{\nabla}_\mp}(X^H_i,X^H_j)=\pm (h^{kl}(d^{\nabla}h)({\dfrac{\partial }{\partial
{{x}}^{i}}},{\dfrac{\partial }{\partial
{{x}}^{j}}})({\dfrac{\partial }{\partial
{{x}}^{k}}})-y_kR^k_{ijl}){\dfrac{\partial }{\partial
{{y}}_{l}}}$$
$$N_{{\tilde{J}}^{\nabla}_\mp}(X^H_i,{\dfrac{\partial }{\partial
{{y}}_{j}}})=h^{jl}(h^{sr}y_kR^k_{ils}X^H_r \mp (d^{\nabla}h)({\dfrac{\partial }{\partial
{{x}}^{i}}},{\dfrac{\partial }{\partial
{{x}}^{l}}})({\dfrac{\partial }{\partial
{{x}}^{r}}}){\dfrac{\partial }{\partial
{{y}}_{r}}})$$
$$N_{{\tilde{J}}^{\nabla}_\mp}({\dfrac{\partial }{\partial
{{y}}_{i}}},{\dfrac{\partial }{\partial
{{y}}_{j}}})=h^{ik}h^{jl}(h^{ps}(d^{\nabla}h)({\dfrac{\partial }{\partial
{{x}}^{l}}},{\dfrac{\partial }{\partial
{{x}}^{k}}})({\dfrac{\partial }{\partial
{{x}}^{p}}})X^H_s+y_sR^s_{klr}{\dfrac{\partial }{\partial
{{y}}^{r}}}).$$
Then the proof is complete.
\end{proof}

\begin{remark} If $h$ is a non-degenerate skew-symmetric $(0,2)$-tensor field on $M$, then the same construction gives rise to a Hermitian, respectively Para-Hermitian, structure on $T^*M$.
\end{remark}

\subsection{Norden and Para-Norden structures on tangent bundles}

Let $(M,h)$ be a pseudo-Riemannian manifold and let ${\hat J}_c$, ${\hat J}_p$ be the generalized complex structure and the generalized product structure defined by $h$ in (\ref{m10}) and (\ref{m20}) respectively. Again we will denote $\hat{J}_\mp$ for $\hat{J}_c=:\hat{J}_-$ and $\hat{J}_p=:\hat{J}_+$.

Let $\nabla$ be an affine connection on $M$ and let ${\Psi}^{\nabla} :TM \oplus T^{*}M \rightarrow T(TM)$ be the bundle morphism defined by (\ref{m40}). We define:
$${\bar J}^{\nabla}_\mp=:{\Psi}^{\nabla} \circ {\hat J}_{\mp} \circ ({\Psi}^{\nabla})^{-1}.$$

Let $X \in C^{\infty}(TM)$ and let  $X^H$, $X^V$ be respectively the horizontal and vertical lift of $X$. We have immediately that
$${\bar J}^{\nabla}_\mp(X^H)=X^V$$
$${\bar J}^{\nabla}_\mp(X^V)=\mp X^H.$$

A direct computation gives the following:
\begin{proposition}  Let $h^H$ be the horizontal lift metric of $h$ on $TM$. Then $(TM,{\bar J}^{\nabla}_-, h^H)$ is a Norden manifold and  $(TM,{\bar J}^{\nabla}_+, h^H)$ is an almost Para-Norden manifold.
\end{proposition}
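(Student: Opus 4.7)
The plan is to verify the three defining conditions in each case: that $\bar J^{\nabla}_\mp$ squares to $\mp I$ (with eigenbundles of equal rank in the product case), that $h^H$ is a pseudo-Riemannian metric, and that $\bar J^{\nabla}_\mp$ is $h^H$-symmetric. None of these steps involves a genuine obstacle, since the construction has been arranged so that everything decomposes cleanly on the horizontal/vertical splitting of $T(TM)$.

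First I would use the formulas $\bar J^{\nabla}_\mp(X^H)=X^V$ and $\bar J^{\nabla}_\mp(X^V)=\mp X^H$ recorded just before the statement: applying $\bar J^{\nabla}_\mp$ twice gives $\mp X^H$ and $\mp X^V$ on the two summands, so $(\bar J^{\nabla}_\mp)^2=\mp I$ on all of $T(TM)$. In the $+$ case the $(+1)$- and $(-1)$-eigenbundles are spanned pointwise by sections of the form $X^H+X^V$ and $X^H-X^V$ respectively; each has rank $n=\dim M$, so they have equal rank as required by the definition of an almost Para-Norden manifold.

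Next I would check that $h^H$ is a pseudo-Riemannian metric on $TM$. In adapted local coordinates on $TM$, the matrix of $h^H$ in the frame $\{X_i^H,\partial/\partial y^j\}$ is $\begin{pmatrix}0 & h_{ij}\\ h_{ji} & 0\end{pmatrix}$, which by symmetry of $h$ (we are in the pseudo-Riemannian setting) is symmetric, and which is non-degenerate iff $h$ is. Hence $h^H$ is a pseudo-Riemannian metric.

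Finally I would verify the $h^H$-symmetry of $\bar J^{\nabla}_\mp$. The cleanest route is to invoke the two ingredients already in the paper: the identity $\bar h = -2\langle\cdot,\cdot\rangle$ (for $h$ symmetric), and the Remark that $\hat J_\mp$ is $\langle\cdot,\cdot\rangle$-symmetric when $h$ is symmetric. Since $\bar J^{\nabla}_\mp = \Psi^{\nabla}\circ\hat J_\mp\circ(\Psi^{\nabla})^{-1}$ and $\bar h = (\Psi^{\nabla})^* h^H$, the symmetry of $\hat J_\mp$ under $\langle\cdot,\cdot\rangle$ pulls back to symmetry of $\bar J^{\nabla}_\mp$ under $h^H$. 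Alternatively, one may verify this by the four direct computations on the basis $\{X^H,Y^V\}$: for instance, $h^H(\bar J^{\nabla}_\mp X^H, Y^H)=h^H(X^V,Y^H)=h(X,Y)=h^H(X^H,Y^V)=h^H(X^H,\bar J^{\nabla}_\mp Y^H)$, and $h^H(\bar J^{\nabla}_\mp X^V,Y^V)=\mp h(X,Y)=h^H(X^V,\bar J^{\nabla}_\mp Y^V)$, with the two mixed combinations both producing $0$. This completes the verification of the Norden (respectively almost Para-Norden) axioms.
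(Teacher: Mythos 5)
Your proposal is correct and carries out exactly the ``direct computation'' that the paper asserts but does not write down: checking $(\bar J^{\nabla}_\mp)^2=\mp I$, equal rank of the eigenbundles in the product case, non-degeneracy and symmetry of $h^H$, and the four basis identities for $h^H$-symmetry on the horizontal/vertical splitting. Your alternative derivation of the $h^H$-symmetry by pulling back the identity $\langle\hat J_\mp\sigma,\tau\rangle=\langle\sigma,\hat J_\mp\tau\rangle$ through $\bar h=(\Psi^{\nabla})^*h^H=-2\langle\cdot,\cdot\rangle$ is also valid (modulo the standard identification of $T(TM)$ with the pull-back of $TM\oplus T^*M$) and explains conceptually why the computation works.
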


\begin{remark} The almost complex structure ${\bar J}^{\nabla}_-$ is the canonical almost complex structure of $TM$ defined in \cite{d}. In particular, it is integrable if and only if $\nabla$ is flat and torsion-free.
\end{remark}

\textit{Adara M. Blaga}

\textit{Department of Mathematics}

\textit{West University of Timi\c{s}oara}

\textit{Bld. V. P\^{a}rvan nr. 4, 300223, Timi\c{s}oara, Rom\^{a}nia}

\textit{adarablaga@yahoo.com}

\bigskip

\textit{Antonella Nannicini}

\textit{Department of Mathematics and Informatics "U. Dini"}

\textit{University of Florence}

\textit{Viale Morgagni, 67/a, 50134, Firenze, Italy}

\textit{antonella.nannicini@unifi.it}

\end{document}